\patchcmd{\maketitle}{\@copyrightspace}{}{}{}
\def \IMG {images}
\newcommand{\graph}{\mathbf{G}}
\newcommand{\reels}{\mathbb{R}}
\newcommand{\card}[1]{|{#1}|} 
\newtheorem{theorem}{Theorem}
\newtheorem{definition}{Definition}
\newtheorem{lemma}{Lemma}
\newtheorem{remark}{Remark}
\newtheorem{proposition}{Proposition}
\newtheorem{corollary}{Corollary}
\newtheorem{example}{Example}
\def\sharedaffiliation{%
\end{tabular}
\begin{tabular}{c}}
\begin{document}


\title{Path-Complete Graphs and  Common Lyapunov Functions\titlenote{This work was supported by the French Community of Belgium and by the IAP network DYSCO. D.A. is .... N.A. M.P. if a F.N.R.S./F.R.I.A. Fellow.  R.J.  is a Fulbright Fellow and a FNRS Fellow.}
}
\numberofauthors{4} 
%

\author{
\alignauthor $\,$
\alignauthor David Angeli
\alignauthor $\,$
\sharedaffiliation
	   \affaddr{Dept. of Electrical and Electronic Engineering, Imperial College London, UK}\\
       \affaddr{Dept. of Information Engineering, University of Florence, Italy. }\\
        \email{d.angeli@imperial.ac.uk}
\and  
\alignauthor Matthew Philippe
\alignauthor Nikolaos Athanasopoulos
\alignauthor Rapha\"{e}l M. Jungers \titlenote{Currently visiting UCLA, Department of Electrical Engineering, Los Angeles, USA.}
\sharedaffiliation
      \affaddr{ICTEAM - Dept. of Mathematical Engineering, Universit\'e catholique de Louvain, Louvain-la-Neuve, Belgium}  \\
      \email{ $\{$matthew.philippe, nikolaos.athanasopoulos,raphael.jungers$\}$@uclouvain.be}
}
\date{13 October 2016}

\maketitle

\begin{abstract}
A Path-Complete Lyapunov Function is an algebraic criterion 
composed of a finite number of functions, called its pieces, and a directed, labeled graph defining \emph{Lyapunov inequalities}  between these pieces. It provides a stability certificate for discrete-time switching systems under arbitrary switching.\\
In this paper, we prove that the satisfiability of such a criterion implies the existence of a Common Lyapunov Function, expressed as the composition of minima and maxima of the pieces of the Path-Complete Lyapunov function. The converse, however, is not true even for discrete-time linear systems: we present such a system where a max-of-2 quadratics Lyapunov function exists while no corresponding Path-Complete Lyapunov function with 2 quadratic pieces exists.\\
In light of this, we investigate when it is possible to decide if a Path-Complete Lyapunov function is less conservative than another.
By analyzing the combinatorial and algebraic structure of the graph and the pieces respectively, we provide  simple tools to decide when the existence of such a Lyapunov function implies that of another.
 \end{abstract}

\keywords{Discrete-time switching systems, Lyapunov Function, Path-Complete graphs, Observer Automaton.}
\section{Introduction}
Switching systems are dynamical systems for which the state dynamics varies between different operating
modes. They find application in several applications and theoretical fields, see e.g. \cite{PhEsSODT,AhJuJSRA,LiMoBPIS, JuTJSR}. They take the form
\begin{equation}
x(t+1) = f_{\sigma(t)}(x(t))
\label{eq:swsys}
\end{equation}
where the state
$x(t)$ evolves in $\reels^n$. The \emph{mode} $\sigma(t)$ of the system at time $t$ takes value from a set  $\{1,\ldots,M\}$ for some integer $M$.  Each $i$ mode of the $M$ modes of the system is described by a continuous map $f_{i}(x) : \reels^n \rightarrow \reels^n$. We assume that $f_i(x)=0 \Leftrightarrow x = 0$ for all modes.

 In this paper, we study criteria guaranteeing that the system (\ref{eq:swsys}) is stable under \emph{arbitrary switching}, i.e. where the function $\sigma(\cdot)$, called the switching sequence, takes values in $\{1, \ldots, M\}$ at any time $t$. This analysis can be extended through the more general setting of \cite{PhEsSODT} (see \cite{KoTBWF}, \cite[Section 3.5]{PhEsSODT}).
We study the following notions of stability, where $x(t, \sigma(\cdot), x_0)$ is the state of the system \eqref{eq:swsys} at time $t$ with a switching sequence $\sigma(\cdot)$ and an initial condition $x_0 \in \reels^n$.
\begin{definition}
The system \eqref{eq:swsys} is Globally Uniformly  Stable if there is a $\mathcal{K}_{\infty}$-function\footnote{A function $\alpha(z)$ is of class $\mathcal{K}$ if it is continuous, strictly increasing, with $\alpha(0) = 0$. It is of class $\mathcal{K}_\infty$ if it is unbounded as well.} $\alpha : \reels^+ \mapsto \reels^+$ such that for all $x_0 \in \reels^n$, for all switching sequences $\sigma(\cdot)$ and for all $t \geq 0$,
$$\| x(t, \sigma(\cdot), x_0) \| \leq \alpha(\|x_0\|).$$

The system is Globally Uniformly Asymptotically  Stable if there is a $\mathcal{KL}$-function\footnote{A function $\beta(z,t)$ is of class $\mathcal{KL}$ if, for each fixed $t$, $\beta(z,t)$ is a $\mathcal{K}$-function in $z$, and for each fixed $z$, $\beta(z,t)$ is a continuous function of $t$, strictly decreasing with $\lim_{t \rightarrow \infty} \beta(z,t) = 0$.} $\beta : \reels^+ \times \reels^+  \mapsto \reels^+$ such that for all $x_0 \in \reels^n$, for all switching sequences $\sigma(\cdot)$ and for all $t \geq 0$,
$$\| x(t, \sigma(\cdot), x_0) \| \leq \beta(\|x_0\|,t).$$
\label{def:stability}
\end{definition}
The stability analysis of switching systems is a central and challenging question in control (see \cite{LiAnSASO} for a description of several approaches on the topic). The question of whether or not a system is uniformly globally stable is in general undecidable, even when the dynamics is switching \emph{linear} (see e.g.  \cite{JuTJSR,BlTsTBOA}).

A way to assess stability for switching systems is to use Lyapunov methods, with the drawback that they often provide conservative stability certificates.
 For example, for \emph{linear} discrete-time switching systems of the form
 $$x(t+1) = A_{\sigma(t)} x(t)$$
 it is easy to check for the existence of a \emph{common quadratic} Lyapunov function (see e.g. \cite[Section II-A]{LiAnSASO}). However, such a Lyapunov function may not exist, even though the system is asymptotically stable (see e.g. \cite{LiMoBPIS, LiAnSASO}). Less conservative parameterizations of candidate Lyapunov functions have been proposed, at the cost of greater computational effort (e.g. for linear switching systems, \cite{PaJaAOTJ} uses sum-of-squares polynomials, \cite{GoHuDMII} uses max-of-quadratics Lyapunov functions,  and \cite{AnLaASCF} uses polytopic Lyapunov functions). \emph{Multiple Lyapunov functions} (see \cite{BrMLFA,ShWiSCFS,JoRaCOPQ}) arise as an alternative to common Lyapunov functions.
In the case of linear systems, the multiple \emph{quadratic} Lyapunov  functions such as those introduced in \cite{BlFeSAOD,DaRiSAAC, LeDuUSOD, EsLeCOLS} hold special interest as checking for their existence boils down to solving a set of LMIs. The general framework of \emph{Path-Complete} Lyapunov functions was recently introduced in \cite{AhJuJSRA} in this context, for analyzing and unifying the approaches cited above.\\
A Path-Complete Lyapunov function is a multiple Lyapunov function composed of a finite set of \emph{pieces} $\mathcal{V} = (V_i)_{i = 1, \ldots, N}$, with $V_i : \reels^n \mapsto \reels^+$, and a set of \emph{valid Lyapunov inequalities} between these pieces. We assume there exist two $\mathcal{K}_\infty$-functions $\alpha_1$ and $\alpha_2$ such that
\begin{equation}
 \forall x \in \reels^n,\, \forall i \in \{1, \ldots, N\}, \, \alpha_1(\|x\|)\leq V_i(x)\leq \alpha_2(\|x\|).
\label{eq:VIsKappa}
\end{equation}
These Lyapunov inequalities are represented by a directed and labeled graph $\graph = (S,E)$, where $S$ is the set of nodes, and $E$ the set of edges of the graph.There is one node in the graph for each one of the pieces $(V_i)_{i \in \{1 , \ldots, N \}}$ of the Lyapunov function. An edge takes the form $(p,q,w) \in E$, where $p,q \in S$ are respectively  its source and destination nodes, and where $w$ is the \emph{label} of the edge. Such a label is a finite sequence of modes of the system (\ref{eq:swsys}) of the form $w = \sigma_1, \ldots, \sigma_k$, with $\sigma_i \in \{1, \ldots, M\}$, $1 \leq i \leq k$.\\
An edge as described above encodes the Lyapunov inequality\footnote{We consider here certificates for Global Uniform Stability.  Analogous criteria for Global Uniform Asymptotic Stability can be obtained  with strict inequalities in (\ref{eq:ValidLyapInequality}).}
\begin{equation}
(p,q,w) \in E \Rightarrow \, \forall x \in \reels^n, \, V_q(f_w(x)) \leq V_p(x),
\label{eq:ValidLyapInequality}
\end{equation}
where $1 \leq p,q \leq P$ and for $w = \sigma_1 \ldots \sigma_k$, with $\sigma_i \in \{1, \ldots, M\}$, and $f_w = f_{\sigma_k} \circ \cdots \circ f_{\sigma_1}$ (see Figure \ref{fig:edgeInAGraph}).
\begin{figure}[!ht]
\centering
\begin{tikzpicture}[->,>=stealth',shorten >=1pt,auto,node distance=2cm,
                    semithick]
 
  \node[state] (Vi)                           {$p$};
  \node[state] (Vj)     [right of = Vi]       {$q$};
  \path (Vi)edge [bend left]             node {$w = \sigma_1\ldots\sigma_k$} (Vj);
\end{tikzpicture}
\caption{The edge encodes  $V_q(f_w(x)) \leq V_p(x)$.}
\label{fig:edgeInAGraph}
\end{figure}
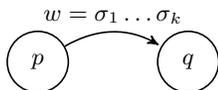
By transitivity, paths in the graph $\graph$ encode  Lyapunov inequalities as well. Given a path $p = (s_i, s_{i+1},w_i)_{i = 1 , \ldots, k}$ of length $k$, we define the \emph{label} of the path as the sequence $w_1\ldots w_k$ (i.e. the concatenation of the sequences  on the $k$ edges). Such a path
encodes the inequality $V_{s_{k+1}}(f_{w_k} \circ \cdots \circ f_{w_1}(x)) = V_{s_{k+1}}(f_{w_1\ldots w_K}(x)) \leq V_{s_1}(x)$.\\
The graph $\graph$ defining a Path-Complete Lyapunov function has a special structure, which is defined below and is illustrated in Figure \ref{fig:PC}.

\begin{definition}[Path-Complete Graph]
Consider a directed and labeled graph $\graph = (S,E)$,  with edges $(s,d,w) \in E$ with $s,d \in S$ and the label $w$ is a finite sequence over $\{1, \ldots, M\}$. The graph is \emph{path-complete} if for any finite sequence $w$ on $\{1, \ldots, M\}$, there is a path in the graph with a label $w'$ such that  $w$ is contained in $w'$.
\label{def:PC}
\end{definition}

 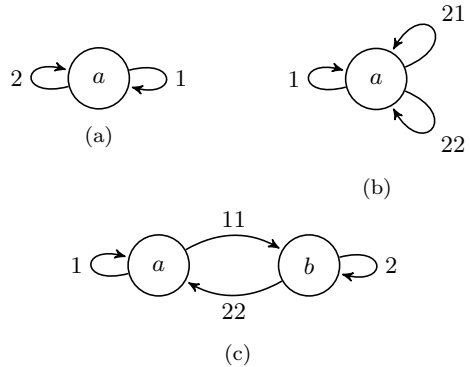
\begin{figure}[!ht]
\centering
\begin{subfigure}[c]{0.4\columnwidth}
\centering
\begin{tikzpicture}[->,>=stealth',shorten >=1pt,auto,node distance=2.8cm,
                   semithick, scale = 1, transform shape ]
 \node[state] (V1)                           {$a$};

 \path (V1)edge [loop right] node {$1$}             (V1)
     	   edge [loop left]  node {$2$}             (V1)
     	   ;
\end{tikzpicture}
\caption{}
\label{fig:pc*}
\end{subfigure}
~
\begin{subfigure}[c]{0.4\columnwidth}
\centering
\begin{tikzpicture}[->,>=stealth',shorten >=1pt,auto,node distance=2.8cm,
                   semithick, scale = 1, transform shape ]
 \node[state] (V1)                           {$a$};

 \path (V1)edge [loop left] node {$1$}             (V1)
     	   edge [in = 60, out = 22.5, loop]  node [above right] {$21$}             (V1)
 		   edge [in = 300, out = 337.5, loop]  node {$22$}             (V1)
     	   ;
\end{tikzpicture}
\caption{}
\label{fig:pc2}
\end{subfigure}
\centering
\begin{subfigure}[c]{\columnwidth}
\centering
\begin{tikzpicture}[->,>=stealth',shorten >=1pt,auto,node distance=2cm,
                    semithick, scale = 1, transform shape ]
 \node[state] (V1)                           {$a$};
 \node[state] (V2)     [right of = V1]       {$b$};
  \path (V1)edge [looseness = 10, loop left]             node {$1$} (V1)
      	    edge [bend left] node {$11$}            (V2)
     	(V2)edge [bend left]             node {$22$} (V1)
      	    edge [loop right] node {$2$} (V2);
\end{tikzpicture}
\caption{}
\label{fig:notpc}
\end{subfigure}
\caption{The graphs on Figure \ref{fig:pc*} and \ref{fig:pc2} are both path-complete, but the graph on Figure \ref{fig:notpc} is not: there are no paths containing the finite sequence $1212$. }
\label{fig:PC}
\end{figure}

It is shown in \cite[Theorem 2.4]{AhJuJSRA} that a  Path-Complete Lyapunov function is indeed a sufficient stability certificate for a switching system\footnote{While the cited result relates to linear systems and homogeneous Lyapunov functions, it extends directly to the more general setup studied here.}. Interestingly, it was recently shown in \cite{JuAhACOL} that, for linear systems, given a candidate multiple Lyapunov function with quadratic pieces $(V_i)_{i = 1, \ldots, N}$ and with Lyapunov inequalities encoded by a graph $\graph$, we cannot conclude stability \emph{unless} $\graph$ is path-complete.

In this paper we first ask a natural question which aims to reveal the connection to classic Lyapunov theory:
Can we extract a Common Lyapunov function for the system (\ref{eq:swsys}) from  a Path-Complete Lyapunov function?  We answer this question affirmatively in Section \ref{section:embedded}, and show that we can always extract a Lyapunov function which is of the form
\begin{equation}
 V(x)  = \min_{S_1, \ldots, S_k \subseteq S} \left ( \max_{s \in S_i} V_s(x) \right ).
 \label{eq:embededLyapunovFunction}
\end{equation}
Our proof is constructive and makes use of a classical tool from automata theory, namely the \emph{observer automaton}, to form subsets of nodes in $\graph$ that interact in a well defined manner.
Next, we show in Subsection \ref{subsec:converse} that the converse does not hold. In detail, we show that there is an asymptotically stable linear system that has a max-of-2-quadratics Lyapunov function,
but for which no Path-Complete  max-of-2-quadratics Lyapunov function exists.
In Section \ref{section:partial_order} we turn our attention to the problem of deciding a priori when a candidate Path-Complete Lyapunov function provides less conservative stability certificates than another.
By analyzing the combinatorial and algebraic structure of the graph and the pieces respectively, we provide tools in Subsections \ref{subsec:Bij} and \ref{subsec:Simu} to decide when the existence of such a Lyapunov function implies that of another.
We illustrate our results numerically in Section \ref{Sec:NumericalExample}, and draw the conclusions in Section~\ref{conclusions}.

\section{Preliminaries}
\label{section:preliminaries}

Given any integer $M \geq 1$, we write  $[M] = \{1, \ldots, M\}$.
For the sake of exposition, the directed graphs $\graph = (S,E) $ considered herein have the following property: the labels on their edges are of length 1, i.e., for $(i,j,w) \in E$, $w \in [M]$ (which is not the case, e.g. for the graph of Figure \ref{fig:pc2}).
It is easy to extend our results to the more general case, as shown in Remark \ref{remark:expanded} later. \\
We use several tools and concepts from \emph{Automata theory} (see e.g. \cite[Chapter 2]{CaLaITDE}).
\begin{definition}[Connected graph]
The graph $G = (S,E)$ is strongly connected if for all pairs $p,q \in S$, there is a path from $p$ to $q$.
\end{definition}
\begin{definition}[(Co)-Deterministic Graph]
$\,$\\A graph $G = (S,E)$ is \emph{deterministic} if for all $s \in S$, and all $\sigma \in [M]$, there is \emph{at most one} edge $(s, q, \sigma) \in E$.\\
The graph is \emph{co-deterministic} if for all $q \in S$, and all $\sigma \in [M]$, there is at most one edge $(s, q, \sigma) \in E$.
\label{def:(co)deterministic}
\end{definition}
\begin{definition}[(Co)-Complete Graph]
$\,$\\A graph  $G = (S,E)$ is \emph{complete} if for all $s \in S$, for all $\sigma \in [M]$ there exists \emph{at least one} edge $(s, q, \sigma) \in E$.\\
The  graph is \emph{co-complete} if for all $q \in S$, for all $\sigma \in [M]$, there exists at least one edge $(s, q, \sigma) \in E$.
\label{def:(co)complete}
\end{definition}
A (co)-complete graph is also path-complete  \cite[Proposition 3.3]{AhJuJSRA}.
The following allows us to dissociate the graph of a Path-Complete Lyapunov function from its pieces:
\begin{definition}
Given a system \eqref{eq:swsys}, a graph $\graph = (S,E)$ and a set of functions $\mathcal{V} = (V_s)_{s \in S}$, we say that $\mathcal{V}$ is a \emph{solution}
for $\graph$, or equivalently,  $\graph$ is feasible for $\mathcal{V}$, if for all $(p,q,\sigma) \in E$, $V_q(f_\sigma(x)) \leq V_p(x).$
\end{definition}
Whenever clear from the context, we will make all references to the system (\ref{eq:swsys}) implicit.

\section{Induced common Lyapunov functions}
\label{section:embedded}

As defined in the introduction, a Path-Complete Lyapunov function is a type of multiple Lyapunov function with a path-complete graph $\graph = (S,E)$ describing Lyapunov inequalities of the form (\ref{eq:ValidLyapInequality}) between its pieces $(V_s)_{s \in S}$.\\ In this section, we show that we can always extract from a Path-Complete Lyapunov function an \emph{induced common Lyapunov function} $V(x)$ for the system, that satisfies
$$\forall x \in \reels^n, \, \forall \sigma \in [M], \, V(f_\sigma(x)) \leq V(x).$$
To do so, we make use of the concept of \emph{observer automaton} \cite[Section 2.3.4]{CaLaITDE}, adapted for directed and labeled graphs (see Remark \ref{rem:GraphAndAutomata}).  This graph is  defined as follows, and its construction is illustrated in Example \ref{ex:observer}.

\begin{definition}[Observer Graph]
Consider a graph $\graph = (S,E)$. The \emph{observer} graph $O(\graph) = (S_O, E_O)$ is a graph where each state corresponds to a subset of $S$, i.e. $S_O \subseteq 2^S$, and is constructed as follows:
\begin{enumerate}
\item[1.] Set  $S_O := \{ S \}$ and $E_O := \emptyset$.
\item[2.] Set $X:=\emptyset$. For each pair $(P,\sigma) \in S_O \times [M]$: 
\begin{enumerate}
\item Compute $Q := \cup_{p \in P} \{q | \, (p,q,\sigma) \in E\}.$
\item If $Q \neq \emptyset$, set $E_O:=E_O\cup\{(P,Q,\sigma)\}$ then  $X:=X\cup Q$.
\end{enumerate}
\item[3.] If  $X \subseteq S_O$, then the observer is  given by $O(\graph) = (S_O, E_O)$. Else, set
 $S_O := S_O \cup X$ and go to step 2.
\end{enumerate}
\label{def:observers}
\end{definition}

We stress that the nodes of the observer graph $O(\graph)$ correspond to \emph{sets of nodes} of the graph $\graph$.

\begin{example}
Consider the graph $\graph$ of Figure \ref{fig:graphExample}.
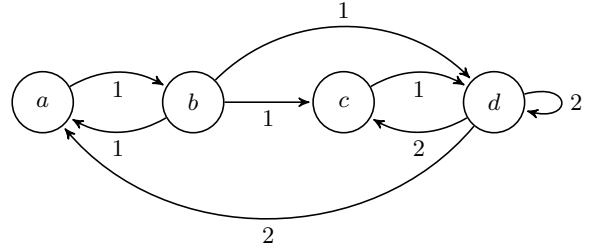
\begin{figure}[!ht]
\centering
\begin{tikzpicture}[->,>=stealth',shorten >=1pt,auto,node distance=2cm,
                    semithick, scale = 1, transform shape ]
 \node[state] (a)                          {$a$};
 \node[state] (b)     [right of = a]       {$b$};
 \node[state] (c)     [right of = b]       {$c$};
 \node[state] (d)     [right of = c]       {$d$};

  \path (a) edge [bend left]        node [below]{$1$} (b)
  	    (c) edge [bend left] node [below]{$1$} (d)
  	    (b) edge [bend left] node [below]{$1$} (a)
  	    	edge  node [below]{$1$} (c)
  	    	edge [bend left = 45]node [above] {$1$} (d)
  	    (d) edge [bend left = 50]node{$2$} (a)
  	    	edge [bend left] node{$2$} (c)
  	    	edge [loop right] node{$2$} (d)
  	    	;
\end{tikzpicture}
\caption{A path-complete graph on 4 nodes \emph{a,b,c,d} and 2 modes, for Example \ref{ex:observer}.}
\label{fig:graphExample}
\end{figure}
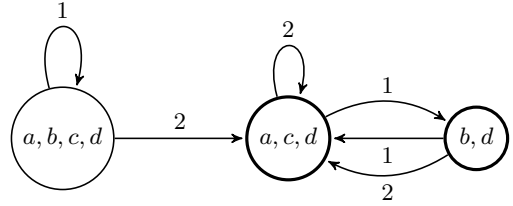
\begin{figure}[!ht]
\centering
\begin{tikzpicture}[->,>=stealth',shorten >=1pt,auto,node distance=2cm,
                    semithick, scale = 1, transform shape ]
 \node[state] (abcd)                          {$a,b,c,d$};
 \node[state, very thick] (acd)     [node distance = 3cm, right of = abcd]       {$a,c,d$};
 \node[state, very thick] (bd)     [node distance = 2.5cm, right of = acd]       {$b,d$};

  \path (abcd) edge [loop above] node {$1$} (abcd)
  				edge node {$2$} (acd)
  		(acd) edge [loop above] node {$2$} (acd)
  			  edge [bend left] node {$1$} (bd)
  	    (bd) edge node {$1$} (acd)
  	    	edge [bend left] node{$2$} (acd)
  	    	;
\end{tikzpicture}
\caption{Observer graph constructed from the graph $\graph$ on Figure \ref{fig:graphExample}. Each node of the observer $O(\graph)$ is associated to a set of nodes of $\graph$. Notice that the subgraph on the nodes $\{a,c,d\}$, $\{b,d\}$ is itself a \emph{complete} graph.}
\label{fig:graphExample_Obs}
\end{figure}
The observer graph $O(\graph)$ is given on Figure \ref{fig:graphExample_Obs}. The first run through step 2 in Definition \ref{def:observers} is as follows. We have $P = S$. For $\sigma = 1$ the set $Q$ is again $S$ itself: indeed, each node $s \in S$ has at least one inbound edge with the label $1$. For $\sigma = 2$, since node $b$ has no inbound edge labeled $2$, we get $Q = \{a,c,d\}$. This set is then added to $S_O$ in step 3, and the algorithm repeats step 2 with the updated $S_O$.\\
\label{ex:observer}
\end{example}

\begin{remark}
The notion of the \emph{observer automaton} is presented in \cite[Section 2.3.4]{CaLaITDE}. Generally, an automaton is represented by a directed labeled graph with a \emph{start} state and one or more \emph{accepting} states. The graphs considered here can be easily transformed into non-deterministic automata by using the so-called $\epsilon$-transitions (see \cite[Section 2.2.4]{CaLaITDE} for definitions). Given a graph $\graph=(S,E)$, one can add $\epsilon$-transitions from a new node ``\emph{a}'' to all node in S and from all nodes in S to a new  node ``\emph{b}''. The generated automaton has the node ``\emph{a}'' as the start state and the node ``\emph{b}'' as the (single) accepting state.
\label{rem:GraphAndAutomata}
\end{remark}

Observe that in Figure \ref{fig:graphExample_Obs} the subgraph of $O(\graph)$ with  two nodes $\{a,c,d\}$ and $\{b,d\}$ is complete and strongly connected. This is due to a key property of the observer graph.  We suspect that this property is known (maybe in the automata theory literature) but we have not been able to find a reference until now.

\begin{lemma}
The observer graph $O(\graph) = (S_O, E_O)$ of any path-complete graph $\graph = (S,E)$ contains a \emph{unique} sub-graph $O^\star(\graph) = (S_O^\star, E_O^\star)$ which is \emph{strongly connected, deterministic and complete}.
\label{lemma:observerSubgraph}
\end{lemma}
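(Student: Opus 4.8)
The plan is to establish the ``deterministic'' and ``complete'' properties for the \emph{entire} observer graph first, use these to produce the claimed subgraph as a sink strongly connected component (SCC), and then obtain uniqueness from a monotonicity argument that is special to the observer construction.

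First I would record two structural facts about $O(\graph)$. Determinism is immediate from the construction: for a node $P \in S_O$ and a symbol $\sigma \in [M]$, the successor $Q = \bigcup_{p \in P}\{q : (p,q,\sigma) \in E\}$ is uniquely determined, so there is at most one edge $(P,Q,\sigma)$. Completeness is where path-completeness enters. Every node of $S_O$ is, by construction, reachable from the initial node $S$; write such a node as the set of endpoints of all paths in $\graph$ labeled by a word $u$, and denote it $\delta^*(S,u)$ (so $\delta^*(S,u)$ is non-empty iff $\graph$ has at least one path labeled $u$). For any symbol $\sigma$, the successor of $\delta^*(S,u)$ under $\sigma$ is $\delta^*(S,u\sigma)$, and since $\graph$ is path-complete there is a path labeled $u\sigma$ (as $u\sigma$ is a subword of some path label), so $\delta^*(S,u\sigma) \neq \emptyset$. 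Hence every node has an outgoing edge for every symbol, i.e. $O(\graph)$ is complete. Together with determinism this makes each $\sigma$ induce a total map $\delta(\cdot,\sigma) : S_O \to S_O$.

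Existence of a qualifying subgraph then follows by a routine condensation argument: a finite complete deterministic graph has a sink SCC $C^*$ (a terminal node of the condensation DAG), and the subgraph induced on $C^*$ is strongly connected by definition, deterministic by inheritance, and complete because completeness of $O(\graph)$ together with the sink property forces the unique $\sigma$-successor of every node of $C^*$ to lie again in $C^*$.

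The main obstacle, and the heart of the lemma, is \emph{uniqueness}, since a general complete deterministic graph may have several sink SCCs. Here I would exploit that the observer transitions are monotone with respect to inclusion: $P \subseteq P'$ implies $\delta(P,\sigma) \subseteq \delta(P',\sigma)$, hence $\delta^*(\cdot,w)$ is monotone for every word $w$. Let $w_0$ minimize the cardinality $\card{\delta^*(S,w)}$ over all words $w$, and put $P_0 := \delta^*(S,w_0)$. For an arbitrary node $P = \delta^*(S,u)$ one has $\delta^*(P,w_0) = \delta^*(S,u w_0) \subseteq \delta^*(S,w_0) = P_0$ by monotonicity (since $P \subseteq S$), while minimality of $w_0$ gives $\card{\delta^*(S,uw_0)} \geq \card{P_0}$; a subset of $P_0$ of cardinality at least $\card{P_0}$ must equal $P_0$, so $\delta^*(P,w_0) = P_0$. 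Thus $P_0$ is reachable from \emph{every} node of the observer. I would then conclude as follows: any strongly connected, deterministic, complete subgraph $O' = (S',E')$ must have $S'$ closed under all $\delta(\cdot,\sigma)$ (completeness forces each successor back into $S'$), so $P_0 \in S'$ (reach it from any node of $S'$ and use closedness), and strong connectedness together with closedness forces $S'$ to be exactly the set of nodes reachable from $P_0$; since this set is independent of $O'$, and the edge set is then determined by determinism and completeness, the subgraph is unique. Taking $O' = O^\star(\graph)$ the subgraph on $C^*$ identifies it with the set of nodes reachable from $P_0$ and proves both its existence and its uniqueness.
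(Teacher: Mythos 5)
Your proof is correct, but it takes a genuinely different route from the paper's on both halves of the lemma. For \emph{existence}, the paper simply cites the textbook fact that the observer automaton has a complete, deterministic, connected component, whereas you prove it from scratch: you show the whole observer is deterministic (trivially) and complete --- correctly identifying that completeness is exactly where path-completeness of $\graph$ enters, via $\delta^*(S,u\sigma)\neq\emptyset$ because $u\sigma$ is a factor of some path label and edge labels have length $1$ --- and then take a sink strongly connected component of the condensation. This also makes explicit a step the paper leaves implicit in its uniqueness argument, namely that for every word $w$ there is a (unique) path in $O(\graph)$ from $S$ labeled $w$. For \emph{uniqueness}, the paper argues by contradiction: assuming two complete deterministic components $\graph_1,\graph_2$, it follows the two alternating labels $w_1w_2w_1\cdots$ and $w_2w_1w_2\cdots$ from the node $S$, uses monotonicity of the subset transition to obtain strictly nested inclusions $T^i\subset P^i$ and $P^{i+1}\subset T^i$, and concludes $\card{P^{i+1}}\leq\card{P^{i}}-2$, eventually forcing an empty observer node, a contradiction. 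You instead give a direct argument: choose a word $w_0$ whose image $P_0=\delta^*(S,w_0)$ has minimal cardinality; monotonicity plus minimality force $\delta^*(P,w_0)=P_0$ for \emph{every} node $P$, so $P_0$ is reachable from everywhere, and any strongly connected, deterministic, complete subgraph is closed under successors (by ambient determinism), hence contains $P_0$ and must equal the set of nodes reachable from $P_0$, with its edge set then determined. Both proofs rest on the same engine --- monotonicity of $P\mapsto\delta(P,\sigma)$ under inclusion --- but your minimal-image argument (in the spirit of minimal-rank words for synchronizing automata) is self-contained, identifies $O^\star(\graph)$ explicitly as the reachable set of $P_0$, and yields the extra structural fact that $O^\star(\graph)$ is reachable from every node of $O(\graph)$, while the paper's alternating-word contradiction is shorter once existence is granted by the citation.
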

\begin{proof}
The fact that the observer automaton has a complete, deterministic, connected component is well known \cite[p.90]{CaLaITDE}. From Remark \ref{rem:GraphAndAutomata}, the result extends as well to the observer graph. \\
We prove that this component is unique. For the sake of contradiction, we assume that the observer graph has two complete and deterministic connected components  $\graph_1 = (S_{O,1}, E_{O,1})$ and $\graph_2= (S_{O,2}, E_{O,2})$. Each component is itself a path-complete graph. Moreover, since they are deterministic and complete,  there can never be a path from one component to another.

For any sequence $w$ of elements in $[M]$, there exists a \emph{unique} path in $O(\graph)$ with source $S \in S_O$ and label $w$. Since $\graph_1$, $\graph_2$ are in $O(\graph)$, then by construction, there  exist two sequences $w_1$ and $w_2$ such that there is a path from $S \in S_O$ with label $w_1$ that ends in a node in $\graph_1$ and a path with label $w_2$ that ends in a node in $\graph_2$.

We now consider two paths of infinite length which start from $S \in S_O$. The first has the label $w_1w_2w_1\ldots$, illustrated below,
$$S \rightarrow_{w_1} P^1  \rightarrow_{w_2} Q^1 \rightarrow_{w_1} P^2 \rightarrow_{w_2} Q^2 \cdots  $$
and visits the nodes $P^i \in S_{O,1}$ and $Q^i  \in S_{O,1}$  after the $ith$ occurrence of the sequences $w_1$ and $w_2$ respectively.
The second path has the label $w_2w_1w_2\ldots$, illustrated below,
$$S \rightarrow_{w_2} R^1  \rightarrow_{w_1} T^1 \rightarrow_{w_2} R^2 \rightarrow_{w_2} T^2 \cdots  $$
and visits  $R^i \in S_{O,2}$ and $T^i \in S_{O,2}$ after the $i$-th occurrence of the word $w_2$ and $w_1$ respectively.

Since $\graph_1$ and $\graph_2$ are disconnected, we know that $S \neq P^i \neq T^i $ and $ S \neq Q^i \neq R^i$.
 Thus, $P^1 \subset S$ which in turn implies $Q^1 \subset R^1$,  $P^2 \subset T^1$ and so on. More generally, for all $i$, it holds that $Q^i \subset R^i$ and $P^{i+1} \subset T^i$ for all $i$. By symmetry, we have that  $T^i \subset P^i$ and $R^{i+1} \subset Q^i$. Consequently, we observe that\footnote{We denote the cardinality of a discrete set $P$ by $|P|$.} $|P^{i+1}| \leq |P^{i}| - 2$, thus, necessarily, $|P^{|S|-1}|=0$, which is a contradiction since $O(\graph)$ by construction cannot have empty nodes. Thus,  $O(\graph)$ has a unique, strongly connected, deterministic and complete sub-graph.
\end{proof}

We are now in position to introduce our main result.

\begin{theorem}[Induced Common Lyapunov Function]
Consider Path-Complete Lyapunov function with graph $\graph = (S,E)$ and pieces $\mathcal{V} = (V_s)_{s \in S} $ for the system (\ref{eq:swsys}).
Let $O^\star(\graph) = (S^\star_O, E^\star_O)$ be the complete and connected sub-graph of the observer $O(\graph)$.
Then, the function
\begin{equation}
V(x) = \min_{Q \in S^\star_O} \left ( \max_{s \in Q} V_s(x) \right )
\label{eq:minmaxclf}
\end{equation}
is a \emph{Common Lyapunov function} for the system \eqref{eq:swsys}.
\label{thm:EmbeddedCLF}
\end{theorem}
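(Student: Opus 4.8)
The plan is to verify the two defining properties of a common Lyapunov function for the $V$ of \eqref{eq:minmaxclf}: the sandwiching by $\mathcal{K}_\infty$-functions, and the one-step decrease along every mode. The first I would dispatch immediately. By \eqref{eq:VIsKappa} each piece satisfies $\alpha_1(\|x\|) \le V_s(x) \le \alpha_2(\|x\|)$, and since $V$ is built from the $V_s$ by first taking maxima over each subset and then a minimum over subsets, both bounds survive: $\max_{s \in Q} V_s(x) \in [\alpha_1(\|x\|), \alpha_2(\|x\|)]$ for every $Q$, and hence so does the minimum over $Q \in S_O^\star$ (which is nonempty by Lemma~\ref{lemma:observerSubgraph}). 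Thus $\alpha_1(\|x\|) \le V(x) \le \alpha_2(\|x\|)$, so $V$ is positive definite and radially unbounded, and the entire content of the theorem lies in establishing $V(f_\sigma(x)) \le V(x)$.

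For the decrease, I would fix $\sigma \in [M]$ and $x \in \reels^n$, and let $Q^\star \in S_O^\star$ be a subset achieving the outer minimum defining $V(x)$, so that $V(x) = \max_{s \in Q^\star} V_s(x)$. The goal is to exhibit a single node of $O^\star(\graph)$ against which $V(f_\sigma(x))$ can be bounded from above. Here I invoke completeness of $O^\star(\graph)$ from Lemma~\ref{lemma:observerSubgraph}: there is an edge $(Q^\star, P, \sigma) \in E_O^\star$, and its destination $P$ again lies in $S_O^\star$. By the observer construction in Definition~\ref{def:observers}, $P = \cup_{q \in Q^\star}\{r \mid (q,r,\sigma) \in E\}$, so every $r \in P$ admits at least one predecessor $q \in Q^\star$ with $(q,r,\sigma) \in E$. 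The Lyapunov inequality \eqref{eq:ValidLyapInequality} for that edge then gives $V_r(f_\sigma(x)) \le V_q(x) \le \max_{s \in Q^\star} V_s(x) = V(x)$. Taking the maximum over $r \in P$ yields $\max_{r \in P} V_r(f_\sigma(x)) \le V(x)$, and since $P \in S_O^\star$ is one of the subsets over which the minimum defining $V(f_\sigma(x))$ is taken, we obtain $V(f_\sigma(x)) \le \max_{r \in P} V_r(f_\sigma(x)) \le V(x)$, which is exactly the claim.

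The step I expect to be the real obstacle is not the estimate above, which is a short chain of inequalities, but the correct matching of quantifiers to the structure of $O^\star(\graph)$: the outer minimum must be handled by choosing the minimizer $Q^\star$ on the right-hand side and a \emph{specific}, well-chosen successor $P$ on the left-hand side, while the inner maximum must be handled by tracing each element of $P$ back through a $\sigma$-labeled edge of $\graph$. What makes this work is exactly that $O^\star(\graph)$ is complete, guaranteeing the successor $P$ exists \emph{inside} $S_O^\star$ so that it is admissible in the minimum defining $V(f_\sigma(x))$, together with the fact that the observer transition is the \emph{union} of one-step successors, guaranteeing every $r \in P$ is reached from some $q \in Q^\star$. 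I note that determinism of $O^\star(\graph)$ is not actually needed for this direction; completeness and the union structure carry the argument, while the existence and uniqueness from Lemma~\ref{lemma:observerSubgraph} are what license working inside a single closed sub-observer throughout.
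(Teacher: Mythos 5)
Your proof is correct and follows essentially the same route as the paper's: tracing each $r \in P$ backward through a $\sigma$-labeled edge of $\graph$ is exactly the content of Proposition~\ref{prop:hiddenEq:cocomplete} applied to observer edges, and selecting the minimizer $Q^\star$ together with its $\sigma$-successor inside the complete subgraph is Proposition~\ref{prop:hiddenEq:complete} (Corollary~\ref{cor:CLFcomplete}) applied to $O^\star(\graph)$, with Lemma~\ref{lemma:observerSubgraph} invoked in the same role. The only difference is presentational: the paper first shows all of $O(\graph)$ is feasible for the max-functions $W_P(x)=\max_{p\in P}V_p(x)$ and then restricts to $O^\star(\graph)$, whereas you inline both steps into a single pointwise chain of inequalities, and your remark that determinism of $O^\star(\graph)$ is not needed for the decrease is consistent with the paper's argument, which likewise uses only completeness at this stage.
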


The result is illustrated in the following example, and its proof is provided in Subsection \ref{subsection:HiddenInequalitiesAndProofs}.

\begin{example}
Consider the graph $\graph$ of Figure \ref{fig:graphExample} and its observer graph in Figure \ref{fig:graphExample_Obs}. For this observer graph, the unique, strongly connected, deterministic and complete component $O^\star(\graph) = (S_O^\star, E_O^\star)$ has $S_O^\star = \{ \{a,c,d\}, \, \{b,d\}\}$.
Thus, if $\graph$ is feasible for a set of functions $\mathcal{V} = \{V_a, V_b, V_c, V_d\}$, from Theorem \ref{thm:EmbeddedCLF}, we conclude that
\begin{equation}
V(x) = \min \left \{ \max \left ( V_a(x), V_c(x), V_d(x) \right ), \, \max \left ( V_b(x), V_d(x) \right ) \right \}
\label{eq:exampleLyapunovFunction}
\end{equation}
is a Common Lyapunov function.
Figure \ref{fig:levelSetExample} illustrates an example of the level sets of the function  \eqref{eq:exampleLyapunovFunction} when each piece is a quadratic function. Note that this level set is not convex, which shows the expressive power of path-complete criteria.
A geometric illustration of the Lyapunov inequalities infered by the graph $\graph$, and in particular of the fact that $V(f_1(x)) \leq V(x)$, is presented in Figure \ref{fig:propagationExample}.
\begin{figure}[!ht]
\centering
\begin{subfigure}[b]{\columnwidth}
\centering
\includegraphics[scale = 0.3]{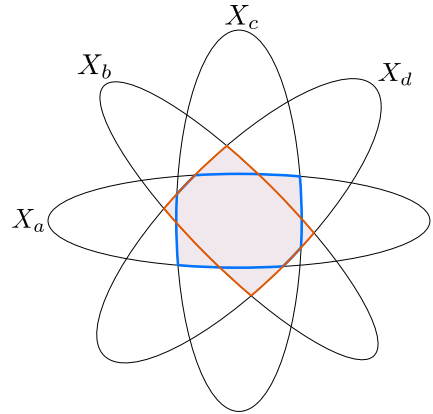}
\caption{A graphical illustration of the level set of $V$ at Eq. \eqref{eq:exampleLyapunovFunction} in Example Example \ref{ex:observer_cont}. The unit sublevel sets $\emph{X}_s$, $s \in \{a,b,c,d\}$ of the functions $(V_s)_{s \in S}$ are  ellipsoids. The level set of $V(x)$ is the union of two sets: the set $X_a \cap X_b \cap X_c$ (in blue) and the set $X_c \cap X_d$ (in orange).}
\label{fig:levelSetExample}
\end{subfigure}

\begin{subfigure}[b]{\columnwidth}
\centering
\includegraphics[scale = 0.3]{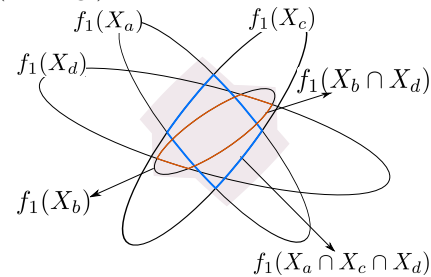}
\caption{
A graphical illustration of the Lyapunov inequalities for Example {\ref{ex:observer_cont}}. Let $(X_s)_{s \in S}$, the level sets of the functions $(V_s)_{s \in S}$ be such as in Figure \ref{fig:levelSetExample}. The image of the set $X_s$ set through $f_1$ is $f_1(X_s) = \{f_1(x), \, x \in X_s\}$. From an edge $(s,d,1) \in E$ of the graph $\graph$, we can infer that $f_1(X_s) \subseteq X_d$ since $V_s(x) \geq V_d(f_1(x))$. We can infer more refined relations by taking several edges. For example, from the edges $(b,a,1)$, $(b,c,1)$, and $(b,d,1)$, we infer that $f_1(X_b) \subseteq X_a \cap X_c \cap X_d$. Taking all edges of the form $(s,d,1)$ into account, we observe that  the level set of $V(x)$ (in gray) at Eq. (\ref{eq:embededLyapunovFunction}) is mapped into itself through $f_1$.
}
\label{fig:propagationExample}
\end{subfigure}
\caption{Illustrations for Example 2.}
\label{fig:Example2}
\end{figure}
\label{ex:observer_cont}
\end{example}

\subsection{Existence of an induced Common Lyapunov Function}
\label{subsection:HiddenInequalitiesAndProofs}

The following results expose relations between two subsets of states of a graph $\graph = (S,E)$ that lead to Lyapunov inequalities between the corresponding subsets of pieces of a Path-Complete Lyapunov function. These intermediate results are central to the proof of Theorem \ref{thm:EmbeddedCLF}.

\begin{proposition}
Consider the system \eqref{eq:swsys} and a graph $ \graph =(S,E)$ which is feasible for a set of functions $(V_s)_{s \in S}$. Take two subsets $P$ and $Q$ of $S$. If there is a label $\sigma$ such that
\begin{equation}
\forall p \in P, \,\exists q \in Q: \, (p,q,\sigma) \in E,
\label{eq:left-total_relation}
\end{equation}
 then
$$ \min_{q \in Q} V_q(f_\sigma(x)) \leq \min_{p \in P}{V_p(x)}.$$
\label{prop:hiddenEq:complete}
\end{proposition}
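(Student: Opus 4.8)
The plan is to fix an arbitrary $x \in \reels^n$ and reduce this inequality between two minima to a single edge of the graph. Since the right-hand side $\min_{p \in P} V_p(x)$ is a minimum over a finite set, it is attained; let $p^\star \in P$ be an index achieving it, so that $\min_{p \in P} V_p(x) = V_{p^\star}(x)$.

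Next I would invoke the hypothesis \eqref{eq:left-total_relation} at this particular node $p^\star$. Because the relation is left-total in $P$, there exists some $q^\star \in Q$ with $(p^\star, q^\star, \sigma) \in E$. This is the only place where the full strength of the assumption is used: every node of $P$, and in particular the minimizer, must have an outgoing $\sigma$-edge landing in $Q$.

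Then feasibility of $\graph$ for $(V_s)_{s \in S}$ supplies, for this very edge, the Lyapunov inequality $V_{q^\star}(f_\sigma(x)) \leq V_{p^\star}(x)$. Finally, since $q^\star \in Q$, the left-hand side minimum is bounded above by its value at $q^\star$, i.e. $\min_{q \in Q} V_q(f_\sigma(x)) \leq V_{q^\star}(f_\sigma(x))$. Chaining these two steps gives $\min_{q \in Q} V_q(f_\sigma(x)) \leq V_{q^\star}(f_\sigma(x)) \leq V_{p^\star}(x) = \min_{p \in P} V_p(x)$, which is exactly the claim; as $x$ was arbitrary, the proposition follows.

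I do not expect a genuine obstacle here: the argument is a direct one-edge chaining, and no property of $P$ or $Q$ beyond \eqref{eq:left-total_relation} is needed. The only point demanding care is the order of the quantifiers. One must select the minimizer $p^\star$ on the right-hand side \emph{first} and only then extract the companion $q^\star$ from the hypothesis, rather than the reverse, so that the existential ``$\exists q$'' is applied to precisely the node realizing $\min_{p \in P} V_p(x)$. Applying the quantifier in the wrong order would leave a gap, since not every $q \in Q$ need be reachable from the minimizing $p^\star$.
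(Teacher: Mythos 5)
Your proof is correct and follows exactly the paper's own argument: fix $x$, pick the minimizer $p^\star \in P$, use \eqref{eq:left-total_relation} to obtain an edge $(p^\star,q^\star,\sigma) \in E$ with $q^\star \in Q$, and chain $\min_{q \in Q} V_q(f_\sigma(x)) \leq V_{q^\star}(f_\sigma(x)) \leq V_{p^\star}(x) = \min_{p \in P} V_p(x)$. Your closing remark about the quantifier order is a fair observation, and implicit in the paper's proof as well.
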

\begin{proof}
Take any $x\in\reels^n$. There exists a node $p^\star\in P$ such that $\min_{p\in P}V_p(x) = V_{p^\star}(x)$.
Also, there is at least one edge $(p^\star,q^\star,\sigma)\in E$, with $q^\star\in Q$. Thus, $V_{q^\star}(f_\sigma(x))\leq V_{p^\star}(x)$
and taking into account that  $\min_{q\in Q}V_{q}(f_\sigma(x))\leq V_{q^\star}(f_\sigma(x))$ the result follows.
\end{proof}
Proposition \ref{prop:hiddenEq:complete} generalizes  the following result, first stated in \cite[Corollary 3.4]{AhJuJSRA}.
\begin{corollary}
If $\graph = (S,E)$ is \emph{complete} and feasible for a set $(V_s)_{s \in S}$, then $\min_{s \in S} V_s(x)$ is a common Lyapunov function for the system (\ref{eq:swsys}).
\label{cor:CLFcomplete}
\end{corollary}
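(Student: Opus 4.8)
The plan is to obtain this as an immediate specialization of Proposition \ref{prop:hiddenEq:complete}, taking the two subsets to be the entire node set $S$. First I would set $P = Q = S$ and examine the hypothesis \eqref{eq:left-total_relation}: for a fixed label $\sigma$, it requires that every node $p \in P$ possess at least one outgoing edge labeled $\sigma$ whose destination lies in $Q$. When $P = Q = S$ this reads ``for every $p \in S$ there exists $q \in S$ with $(p,q,\sigma) \in E$,'' which is precisely the defining property of a \emph{complete} graph (Definition \ref{def:(co)complete}), stated for each $\sigma \in [M]$.

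Hence the single key observation is that completeness of $\graph$ is exactly the instance of the hypothesis of Proposition \ref{prop:hiddenEq:complete} obtained by choosing $P = Q = S$. Since $\graph$ is assumed complete and feasible for $(V_s)_{s \in S}$, the proposition applies for this choice and for every $\sigma \in [M]$, yielding
$$\min_{q \in S} V_q(f_\sigma(x)) \leq \min_{p \in S} V_p(x).$$
Writing $V(x) = \min_{s \in S} V_s(x)$, the left-hand side is $V(f_\sigma(x))$ and the right-hand side is $V(x)$, so $V(f_\sigma(x)) \leq V(x)$ holds for all $x \in \reels^n$ and all $\sigma \in [M]$. This is exactly the common-Lyapunov inequality required by the definition opening Section \ref{section:embedded}.

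Finally I would check that $V$ is a genuine Lyapunov function, i.e. positive definite and radially unbounded. Because each piece satisfies $\alpha_1(\|x\|) \leq V_s(x) \leq \alpha_2(\|x\|)$ from \eqref{eq:VIsKappa} with the \emph{same} $\mathcal{K}_\infty$ bounds $\alpha_1,\alpha_2$, the pointwise minimum $V$ inherits the identical two-sided bound, so it remains sandwiched between two $\mathcal{K}_\infty$ functions. There is essentially no obstacle in this argument; the only point worth flagging is conceptual rather than technical, namely that completeness is precisely the condition that closes the full minimum over $S$ under each mode $f_\sigma$, which is exactly why $\min_{s \in S} V_s$ — and not the minimum over some proper subset — is the correct candidate.
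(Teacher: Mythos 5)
Your proof is correct and follows exactly the paper's route: the paper's entire proof is the observation that Proposition \ref{prop:hiddenEq:complete} applies with $P = Q = S$ for every $\sigma \in [M]$, which is the same key specialization you identify. Your additional remarks --- that completeness is precisely hypothesis \eqref{eq:left-total_relation} in this instance, and that the $\mathcal{K}_\infty$ sandwich from \eqref{eq:VIsKappa} passes to the pointwise minimum --- are sound elaborations of what the paper leaves implicit.
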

\begin{proof}
Proposition \ref{prop:hiddenEq:complete} holds here for $P = Q = S$, and all modes $\sigma \in [M]$.
\end{proof}
\begin{proposition}
Consider the system \eqref{eq:swsys} and a graph $\graph =(S,E)$ which is feasible for a set of functions $(V_s)_{s \in S}$. Take two sets of nodes $P$ and $Q$. If there is a label $\sigma$ such that,
\begin{equation}
\forall q \in Q, \exists p \in P: \, (p,q,\sigma) \in E,
\label{eq:right-total_relation}
\end{equation} then
$$ \max_{q \in Q} V_q(f_\sigma(x)) \leq \max_{p \in P}{V_p(x)}.$$
\label{prop:hiddenEq:cocomplete}
\end{proposition}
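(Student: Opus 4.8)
The plan is to mirror the proof of Proposition \ref{prop:hiddenEq:complete}, but dualized: where that argument selected the node minimizing $V_p(x)$ on the source side and pushed a minimum through the edge, here I will select the node maximizing $V_q(f_\sigma(x))$ on the \emph{destination} side and pull a maximum back along the edge. The key asymmetry is that condition \eqref{eq:right-total_relation} is a right-total (surjective-type) relation from $P$ onto $Q$, exactly the dual of the left-total condition \eqref{eq:left-total_relation}, so every $q \in Q$ is guaranteed an incoming edge from some $p \in P$.

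First I would fix an arbitrary $x \in \reels^n$. Since $Q$ is a finite set, there exists a node $q^\star \in Q$ attaining the maximum, i.e. $\max_{q \in Q} V_q(f_\sigma(x)) = V_{q^\star}(f_\sigma(x))$. Next, I would invoke \eqref{eq:right-total_relation} at this particular $q^\star$ to obtain a node $p^\star \in P$ with $(p^\star, q^\star, \sigma) \in E$. Feasibility of $\graph$ for $(V_s)_{s \in S}$ then gives the Lyapunov inequality along that edge, namely $V_{q^\star}(f_\sigma(x)) \leq V_{p^\star}(x)$. Finally, since $p^\star \in P$, we trivially have $V_{p^\star}(x) \leq \max_{p \in P} V_p(x)$, and chaining these three facts yields
$$\max_{q \in Q} V_q(f_\sigma(x)) = V_{q^\star}(f_\sigma(x)) \leq V_{p^\star}(x) \leq \max_{p \in P} V_p(x),$$
which is the claimed inequality. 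As $x$ was arbitrary, the result follows.

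I do not anticipate a genuine obstacle here: the argument is a short, self-contained dualization, and the only subtlety is getting the quantifier order right. The one point demanding care is that the selection must begin on the $Q$ side (choose $q^\star$ first, then produce $p^\star$ from it), whereas in Proposition \ref{prop:hiddenEq:complete} the selection began on the $P$ side. Choosing $p^\star$ first would not work, because \eqref{eq:right-total_relation} does not guarantee that an arbitrary $p \in P$ has any outgoing $\sigma$-edge into $Q$. With the correct order, finiteness of $Q$ guarantees the maximizer exists, and no additional structural hypothesis on $\graph$ (such as completeness or determinism) is needed.
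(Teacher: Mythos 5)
Your proof is correct and follows essentially the same route as the paper's: fix $x$, pick the maximizer $q^\star \in Q$, use condition \eqref{eq:right-total_relation} to produce $p^\star \in P$ with $(p^\star,q^\star,\sigma)\in E$, and chain $V_{q^\star}(f_\sigma(x)) \leq V_{p^\star}(x) \leq \max_{p\in P}V_p(x)$. Your remark on the quantifier order (selecting on the $Q$ side first, dual to Proposition \ref{prop:hiddenEq:complete}) is exactly the right observation and matches the paper's argument.
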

\begin{proof}
Take any $x\in\reels^n$. There exists a node $q^\star\in Q$ such that $\max_{q\in Q}V_q(f_\sigma(x))=V_{q^\star}(f_\sigma(x))$.
Also, since there exists a node $p^\star\in P$ such that $(p^\star,q^\star,\sigma)\in E$, it holds that $V_{q^\star}(f_\sigma(x))\leq V_{p^\star}(x)\leq \max_{p\in P}V_p(x)$ and the result follows.
\end{proof}
Proposition \ref{prop:hiddenEq:cocomplete} generalizes the following result, first stated in \cite[Corollary 3.5]{AhJuJSRA}.
\begin{corollary}
If $\graph = (S,E)$ is \emph{co-complete} and feasible for a set $(V_s)_{s \in S}$, then $\max_{s \in S} V_s(x)$ is a common Lyapunov function for the system.
\label{cor:CLFcocomplete}
\end{corollary}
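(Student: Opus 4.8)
The plan is to mirror exactly the argument used for Corollary \ref{cor:CLFcomplete}, but invoking the dual result Proposition \ref{prop:hiddenEq:cocomplete} in place of Proposition \ref{prop:hiddenEq:complete}. The only thing to observe is that co-completeness of $\graph$ is precisely the hypothesis \eqref{eq:right-total_relation} instantiated at $P = Q = S$: by Definition \ref{def:(co)complete}, for every $q \in S$ and every $\sigma \in [M]$ there exists at least one edge $(s,q,\sigma) \in E$, which says exactly that for all $q \in Q = S$ there is $p \in P = S$ with $(p,q,\sigma) \in E$.

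With that identification in hand, I would proceed as follows. First I fix an arbitrary mode $\sigma \in [M]$ and apply Proposition \ref{prop:hiddenEq:cocomplete} with $P = Q = S$. Since $\graph$ is feasible for $(V_s)_{s \in S}$ and the co-completeness supplies the required relation, the proposition yields directly
$$\max_{s \in S} V_s(f_\sigma(x)) \leq \max_{s \in S} V_s(x).$$
Setting $V(x) = \max_{s \in S} V_s(x)$, this is the inequality $V(f_\sigma(x)) \leq V(x)$, and since $\sigma$ was arbitrary it holds for every mode in $[M]$, which is the defining property of a common Lyapunov function for \eqref{eq:swsys}.

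The remaining step is to confirm that $V$ is a \emph{bona fide} Lyapunov function and not merely a nonincreasing functional, i.e.\ that it inherits the sandwich bounds \eqref{eq:VIsKappa}. This is immediate: taking the maximum over $s \in S$ of the pointwise bounds $\alpha_1(\|x\|) \leq V_s(x) \leq \alpha_2(\|x\|)$ preserves both sides, giving $\alpha_1(\|x\|) \leq V(x) \leq \alpha_2(\|x\|)$ with the same pair of $\mathcal{K}_\infty$-functions.

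I do not anticipate any genuine obstacle here; the statement is the exact co-complete counterpart of Corollary \ref{cor:CLFcomplete}, and once Proposition \ref{prop:hiddenEq:cocomplete} is available the corollary is a one-line specialization. The single point meriting a word of care is the recognition that the $\max$ (rather than the $\min$) is the correct aggregation in the co-complete setting, which is dictated by the direction of the inequality produced by Proposition \ref{prop:hiddenEq:cocomplete}; choosing the aggregation to match the proposition is what makes the decrease condition close.
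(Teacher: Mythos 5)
Your proof is correct and is essentially the paper's own argument: the paper also proves the corollary by applying Proposition \ref{prop:hiddenEq:cocomplete} with $P = Q = S$ for every $\sigma \in [M]$. Your extra check that $V$ inherits the bounds \eqref{eq:VIsKappa} under $\max$ is a fine (if implicit in the paper) addition, not a departure.
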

\begin{proof}
Proposition \ref{prop:hiddenEq:cocomplete} holds here for $P = Q = S$, and all modes $\sigma \in [M]$.
\end{proof}
We are in the position to prove Theorem \ref{thm:EmbeddedCLF}.

\begin{proof}[of Theorem \ref{thm:EmbeddedCLF}]
Take a Path-Complete Lyapunov function with a  graph $\graph = (S,E)$ and pieces $(V_s)_{s \in S}$. Then, construct the \emph{observer graph} $O(\graph) = (S_O,E_O)$. By definition, there is an edge $(P,Q,\sigma) \in E_O$ if and only if $Q = \cup_{p \in P} \{q | \, (p,q,\sigma) \in E\},$ and therefore, the following property holds for such edges: $\forall q \in Q, \, \exists  p \in P$ such that  $(p,q,\sigma) \in E$.
Consequently, from Proposition \ref{prop:hiddenEq:cocomplete}, we have that
$$(P,Q,\sigma) \in E_O \Rightarrow  \max_{q \in Q} V_q(f_\sigma(x)) \leq \max_{p \in P}{V_p(x)}. $$
Therefore, the graph $ O(\graph) $ is \emph{feasible} for the set of functions $\mathcal{W} = \{W_P(x)\}_{P\in S_O}$, where
 $$W_P(x)=\max_{p\in P }V_p(x), \quad \forall P\in S_O.$$
From Lemma \ref{lemma:observerSubgraph}, there exists a sub-graph $O^\star(\graph) = (S^\star_O, E^\star_O)$ of $O(\graph)$ (with $S^\star_O \subseteq S_O$) which is complete and strongly connected.
Since $O(\graph)$ is feasible for  $\mathcal{W}$, its subgraph $O^\star(\graph)$ is feasible for the $\{W_P\}_{P \in S^\star_O}$.
Finally, since by Lemma \ref{lemma:observerSubgraph} $O^\star(\graph)$ is complete, we apply Corollary \ref{prop:hiddenEq:complete} and deduce that the function $W(x) = \min_{P \in S_O^\star} W_P(x)$ is a common Lyapunov function for the system.
\end{proof}

\begin{remark}
Our results extend to graphs $\graph = (S,E)$ where the labels are finite sequences of elements in $[M]$ (e.g., as in Figure \ref{fig:pc2}, 2c) as follows.
One can apply the results on the so-called \emph{expanded form} of these graphs  \cite[Definition 2.1]{AhJuJSRA}. The idea there is the following: if an edge $(p,q,w) \in E$ has a label $w = \sigma_1, \ldots, \sigma_k$ of length $k \geq 2$, then it is replaced by a path of length $k$, $ (s_i,s_{i+1},\sigma_i)_{i=1,\ldots,k}$ where $\sigma_1 = p$, $\sigma_{k+1} = q$, by adding the nodes $s_{2}, \ldots, s_k$ to the graph. The expanded form is obtained by repeating the process until all labels in the graph are of size 1 (see Figure \ref{fig:expanded_form}).\\
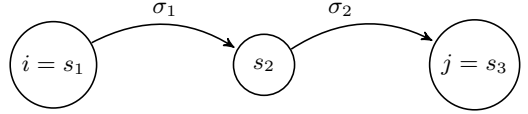
\begin{figure}[!ht]
\centering
\begin{tikzpicture}[->,>=stealth',shorten >=1pt,auto,node distance=2.8cm,
                    semithick]
 
  \node[state] (Vi)                           {$i = s_1$};
  \node[state] (V2)      [right of = Vi]      {$s_2$};  
  \node[state] (Vj)      [right of = V2]      {$j = s_{3}$};
  \path (Vi)edge [bend left]             node {$\sigma_1$} (V2);
    \path (V2)edge [bend left]             node {$\sigma_2$} (Vj);
\end{tikzpicture}
\caption{An edge in $\graph$ $(i,j,\sigma_1\sigma_2)$ (with label of length 2), is replaced by a path of length 2 in the extended form.}
\label{fig:expanded_form}
\end{figure}
If the graph $\graph = (S,E)$ is feasible for a set $\mathcal{V}$, we can always construct a set of functions $\mathcal{W}$ such that the expanded graph $\graph_e = (S_e, E_e)$ of $\graph$ is feasible for $\mathcal{W}$. For example, for a path   $(s_i,s_{i+1},\sigma_i)_{i = 1 , \ldots, k}$ in the expanded form corresponding to an edge $(p,q,w)$ in $\graph$ with $w = \sigma_1 \ldots \sigma_k$, we set $W_{p} = V_p$, $W_{q} = V_q$, and $W_{s_i}(x) = V_j(f_{\sigma_{i+1} \ldots \sigma_k}(x))$.
In Figure \ref{fig:expanded_form}, we would have $W_{s_2}(x) = V_j(f_{\sigma_2}(x))$.
\label{remark:expanded}
\end{remark}

\begin{remark}
We can establish a \emph{`dual'} version of the Theorem \ref{thm:EmbeddedCLF}. In specific, given the graph $\graph$, we reverse the direction of the edges obtaining a graph $\graph^\top$, construct  its observer $O(\graph^\top)$ and reverse the direction of its edges again, obtaining a graph $O(\graph^\top)^\top$.
 This graph is co-deterministic and contains a unique, strongly-connected, co-complete sub-graph that induces a Lyapunov function of the form $$
 V(x)  = \max_{S_1, \ldots, S_k \subseteq S} \left ( \min_{s \in S_i} V_s(x) \right ),
$$
which is, in general, \emph{not} equal to the common Lyapunov function obtained through Theorem \ref{thm:EmbeddedCLF}.
\label{rem:observerReachability}
\end{remark}

\subsection{The converse does not hold}
\label{subsec:converse}

In this subsection we investigate whether or not \emph{any} Lyapunov function of the form (\ref{eq:embededLyapunovFunction}) can be induced from a path-complete graph with as many nodes as the number of pieces of the function itself.
We give a negative answer to this question by providing a counter example from \cite[Example 11]{GoHuDMII}.
Consider the discrete-time linear switching system  on two modes   $x(t+1) = A_{\sigma(t)} x(t)$ with
\begin{equation}
A_1 = \begin{pmatrix}
  0.3 & 1 & 0 \\
  0 & 0.6 & 1\\
  0 & 0 & 0.7
\end{pmatrix}, \,
A_2 = \begin{pmatrix}
0.3 & 0 & 0 \\
-0.5 & 0.7 & 0 \\
-0.2 & -0.5 & 0.7
\end{pmatrix}.
\label{eq:matricesForExample}
\end{equation}

The system has a max-of-quadratics Lyapunov function $V(x)=\max\{V_1(x),V_2(x) \}$,
with $V_i(x) = \left ( x^\top   Q_i x \right )$, $Q_i$ being positive definite matrices. An explicit Lyapunov function is given by\footnote{Such a function can be found numerically by solving the inequalities of \cite[Section 5]{GoHuDMII} for a choice of $\lambda_{:,:,1} = \left ( \begin{smallmatrix} .627 & 0  \\ 1 & 1  \end{smallmatrix} \right )$.}
$$
\begin{aligned}
& Q_1 =
\begin{pmatrix}
  36.95 & -36.91 & -5.58 \\
  \cdot & 84.11 & -38.47\\
  \cdot & \cdot & 49.32
\end{pmatrix}, \\
& Q_2 = \begin{pmatrix}
  13.80 & -6.69 & 4.80 \\
  \cdot & 21.87 & 10.11\\
  \cdot & \cdot & 82.74
\end{pmatrix}.
\end{aligned}
$$

We first observe that these quadratic functions cannot be the solution of a path-complete stability criterion for our example.  Indeed, let us draw the graph of all the valid Lyapunov inequalities. More precisely, we define the graph $\graph = (\{1,2\}, E)$ with two nodes and
\begin{equation}
(i,j,\sigma) \in E \Leftrightarrow A_\sigma^\top Q_j A_\sigma - Q_i \preceq 0,
\label{eq:LMI}
\end{equation}
i.e. the matrix $A_\sigma^\top Q_j A_\sigma - Q_i$ is negative semi-definite.
The graph obtained is presented on Figure \ref{fig:teel}. This graph is not path-complete, and thus we cannot form a Common Lyapunov Function, as done in the previous section, with these two particular pieces.
\begin{figure}[!ht]
\centering
\begin{tikzpicture}[->,>=stealth',shorten >=1pt,auto,node distance=2cm,
                    semithick, scale = 1, transform shape ]
 \node[state] (v1)                          {$V_1$};
 \node[state] (v2)     [right of = v1]      {$V_2$};
  \path (v1) edge [loop left] node {$2$} (v1)
			 edge  node {$2$} (v2)
  		(v2) edge [loop right] node {$1$} (v2)
  		;
\end{tikzpicture}
\caption{The valid Lyapunov inequalities for the quadratic functions for the system and the Lyapunov function in  \cite[Example 11]{GoHuDMII} are represented by the graph above. The graph is not path complete.}
\label{fig:teel}
\end{figure}
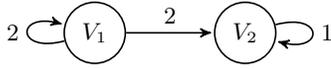

However, we can go further and investigate whether \emph{another} pair of quadratic functions would exist, which we could find by solving a path-complete criterion, and such that their maximum would be a valid CLF.
Recall that co-complete graphs induce Lyapunov functions of the form $\max_{v \in S} V_{v}(x)$ (see Corollary~\ref{cor:CLFcocomplete}).

\begin{proposition}
Consider the discrete-time linear system with two modes (\ref{eq:matricesForExample}).
The system does not have a Path-Complete Lyapunov function with quadratic pieces defined on co-complete graphs with 2 nodes.
\label{prop:noConverse}
\end{proposition}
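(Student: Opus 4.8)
The plan is to turn the statement into a finite case analysis over the possible co-complete graphs on the two nodes, and to dispatch each case by reducing it to the non-existence of a \emph{common quadratic Lyapunov function} (CQLF) for the pair $\{A_1,A_2\}$.

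First I would reduce to \emph{minimal} co-complete graphs. If a co-complete graph is feasible for a pair $(Q_1,Q_2)$, then so is every subgraph, since deleting edges only discards Lyapunov inequalities of the form (\ref{eq:LMI}); and deleting edges from a co-complete graph while preserving co-completeness terminates at a graph with exactly one incoming edge per pair (node, mode), which is \emph{co-deterministic}. Such a graph is encoded by two backward maps $\tau_1,\tau_2:\{1,2\}\to\{1,2\}$, where $\tau_\sigma(q)=p$ records the unique edge $(p,q,\sigma)$ and hence the inequality $A_\sigma^\top Q_q A_\sigma \preceq Q_{\tau_\sigma(q)}$. There are $4^2=16$ such pairs; the relabeling $1\leftrightarrow 2$ of the nodes is a symmetry that leaves $10$ classes, which I would list explicitly.

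The engine of the proof is the sub-lemma that $\{A_1,A_2\}$ has \emph{no} CQLF, i.e. no $Q\succ 0$ satisfies $A_1^\top Q A_1\preceq Q$ and $A_2^\top Q A_2\preceq Q$; I would certify this LMI infeasibility by exhibiting a dual multiplier. Two elementary reductions then feed this sub-lemma. First, if both $\tau_1$ and $\tau_2$ are bijections of $\{1,2\}$ (identity or transposition), then summing the two inequalities carried by each mode yields $A_\sigma^\top(Q_1+Q_2)A_\sigma\preceq Q_{\tau_\sigma(1)}+Q_{\tau_\sigma(2)}=Q_1+Q_2$, so $Q_1+Q_2\succ 0$ is a CQLF, a contradiction. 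Second, if $\tau_1$ and $\tau_2$ share a fixed point $i$, then node $i$ carries a self-loop for each mode, so $Q_i$ satisfies $A_1^\top Q_i A_1\preceq Q_i$ and $A_2^\top Q_i A_2\preceq Q_i$ and is itself a CQLF, again a contradiction. Together these dispose of every class except the three in which exactly one map is a bijection and the other a constant, or both maps are distinct constants.

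These residual cases are the crux, and I expect them to be the main obstacle: here neither $Q_1+Q_2$ nor a single piece is forced to be a CQLF for $\{A_1,A_2\}$. Chaining the inequalities along the backward walk (using the pigeonhole principle on two nodes to close a cycle) shows that a single piece $Q_i$ must dominate a derived family of products of $A_1$ and $A_2$, such as $\{A_2,A_1^2\}$ or $\{A_1,\,A_2^kA_1\}$; but since the joint spectral radius of the example (\ref{eq:matricesForExample}) is strictly below $1$ every such product is contractive, so no contradiction can be read off spectral radii, and the derived families need not themselves be CQLF-free. I would therefore settle each residual case by producing, for the specific matrices, an explicit semidefinite infeasibility certificate for the corresponding coupled LMI; constructing these certificates is the technical heart of the argument.
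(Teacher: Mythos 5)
Your proposal is correct and follows essentially the same route as the paper: the paper's proof likewise reduces to the $16$ minimal (co-deterministic) co-complete graphs on two nodes and checks infeasibility of the LMIs (\ref{eq:LMI}), and its accompanying remark performs exactly your reductions --- node-relabeling symmetry, the bijection-sum argument (Corollary \ref{cor:bijection}) and the shared-self-loop observation --- leaving precisely your three residual classes (the paper's $G_1,G_2,G_3$) together with the single-node common-quadratic graph $G_4$. The paper settles those four feasibility questions by numerically solving the LMIs rather than by exhibiting dual infeasibility certificates, so once you actually produce the certificates you defer, your plan is the same argument in slightly more rigorous form.
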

\begin{proof}
From Definition \ref{def:(co)complete}, there is a total of $16$ graphs that are co-complete and  consist of two nodes and four edges (1 edge per mode and per state).
 We do not examine co-complete graphs with more than four edges since satisfaction of the Lyapunov conditions for these graphs would imply  that of the conditions for at least one graph with four edges.\\
For each graph, the existence of a feasible set of quadratic functions can be tested by solving the LMIs (\ref{eq:LMI}).\\
For the system under consideration, none of the 16 sets of LMIs have a solution. Thus, no induced Lyapunov function of the type $\max\{V_1(x),V_2(x)\}$ exists.
\end{proof}
\begin{remark}
In fact, for the Proof of Proposition \ref{prop:noConverse}, we need only to test four graphs. Three are co-complete with two nodes:
\begin{align*}
G_1 & = (\{a,b\},  \{ (a,a,1), (a,b,1), (b,a,2), (b,b,2) \}), \\
G_{2} & =  (\{a,b\}, \{ (a,a,1), (a,b,1), (a,b,2), (b,a,2) \}), \\
G_{3} & =  (\{a,b\},\{  (a,a,2), (a,b,1), (a,b,2), (b,a,1) \}),
\end{align*}
and the last one corresponds to the common quadratic Lyapunov function
\begin{align*}
G_4 & =( \{a\},  \{ (a,a,1), (a,a,2)  \}).
\end{align*}
One can show that each one of the $13$ remaining co-complete graph is equivalent to one of these four graphs (either isomorphic, or satisfying the conditions of Corollary \ref{cor:bijection} which will be presented later).
\end{remark}

\begin{remark}
For linear systems and for the assessment of asymptotic stability, Path-Complete Lyapunov functions have been shown to be \emph{universal}.
In particular, \cite{LeDuUSOD} show this for the so-called Path-Dependent Lyapunov functions, which are Path-Complete Lyapunov functions with a particular choice of complete graphs, specifically, the so-called \emph{De Bruijn} graphs.\\
The system concerned by Proposition \ref{prop:noConverse} is actually asymptotically stable (see \cite{GoHuDMII}). The interest of Proposition \ref{prop:noConverse} lies in the fact that there do not exist necessarily Path-Complete Lyapunov functions with \emph{the same number of pieces } as a $\max$-type common Lyapunov function.
This is a limitation induced from the combinatorial structure of the Path-Complete Lyapunov function.
\end{remark}

The proof of Proposition \ref{prop:noConverse} highlights an interesting fact. Several different path-complete graphs may induce the same common Lyapunov function (\ref{eq:embededLyapunovFunction}). However,
the strength of the stability certificate they provide may differ. This has a practical implication: if we are given a system of the form (\ref{eq:swsys}), it is unclear which graph $\graph$ we should use to form a Path-Complete Lyapunov function for some number of pieces satisfying a given template (e.g., quadratic functions). We present, in Section \ref{section:partial_order}, a first attempt for analyzing the relative strength of Path-Complete Lyapunov functions based on their graphs and the algebraic properties of the set of functions defining their pieces.

\section{The Partial order on\\ Path-Complete graphs}
\label{section:partial_order}

In this section, we provide tools for establishing an \emph{ordering} between Lyapunov functions defined on general path-complete graphs, extending the work of \cite[Section 4.2]{AhJuJSRA} on \emph{complete} graphs. In the following definition, we introduce $\mathcal{U}$ as a \emph{template} or \emph{family} of functions to which the pieces of Path-Complete Lyapunov functions belong. For example, $\mathcal{U}$ could be the set of quadratic functions: $\mathcal{U} = \{x \mapsto x^\top Q x, \, Q \succ 0\}$. We assume that (\ref{eq:VIsKappa}) holds for any finite subset of $\mathcal{U}$.

\begin{definition} \label{definition_order} (Ordering).
For two path-complete graphs $\graph_1 = (S_1, E_1)$, $\graph_2 = (S_2, E_2)$ and a template $\mathcal{U}$, we write $\graph_1 \leq_\mathcal{U} \graph_2$ if the existence of a Path-Complete Lyapunov function on the graph $\graph_1$ with pieces $(V_s)_{s \in S_1}$, $V_s \in \mathcal{U}$ implies that of a Path-Complete Lyapunov function on the graph $\graph_2$ with pieces $(W_s)_{s \in S_2}$, $W_s \in \mathcal{U}$.
\end{definition}

For each family of functions $\mathcal{U}$, this defines a partial order on path-complete graphs. A minimal element of the ordering, independent of the choice of $\mathcal{U}$, is given by (see Figure \ref{fig:pc*} for $M = 2$)
\begin{equation}
\graph^\star  = (\{a\}, \{(a,a,\sigma)_{\sigma \in [M]}\}).
\label{eq:Gstar}
\end{equation}
A Path-Complete Lyapunov function on this graph  corresponds to the existence of a common Lyapunov function from $\mathcal{U}$ for the system. Thus, $\graph^* \leq_\mathcal{U} \graph$ for any $\mathcal{U}$.

\begin{remark}
We highlight that the properties of the set $\mathcal{U}$ influence the ordering relation defined in Definition \ref{definition_order}. For example, if $\mathcal{U}$ is a singleton, then it is not difficult to see that $\graph_1 \leq_\mathcal{U} \graph_2$ for any two path-complete graphs. From Theorem \ref{thm:EmbeddedCLF}, one can show that this holds  as well for a set $\mathcal{U}$ closed under  $\min$ and $\max $ operations.
\end{remark}
\subsection{Bijections between sets of states}
\label{subsec:Bij}
We present a sufficient condition under which a graph $\graph$ satisfies $\graph \leq_\mathcal{U} \graph^*$.   It is similar in nature to those of Subsection \ref{subsection:HiddenInequalitiesAndProofs}, and requires as well that the set $\mathcal{U}$ is \emph{closed under addition}, an algebraic property satisfied, e.g., by the set of quadratic functions.

\begin{proposition}[Bijection]
Consider a graph $\graph = (S,E)$ feasible for a set of functions $(V_s)_{s \in S}$.  Take two subsets $P$ and $Q$ of $S$. If for $\sigma \in [M]$, there is a subset $E'$ of $E$ such that,
$$
\begin{aligned}
\forall p \in P, \exists! \,  q \in Q: (p,q,\sigma) \in E',\\
 \forall q \in Q, \exists! \, p \in P: (p,q,\sigma) \in E',
 \end{aligned}$$
then
$$ \sum_{q \in Q} V_q(f_\sigma (x)) \leq \sum_{p \in P} V_p(x), \, \forall x \in \reels^n. $$
\label{prop:bijection}
\end{proposition}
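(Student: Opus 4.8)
The plan is to mirror the proofs of Propositions \ref{prop:hiddenEq:complete} and \ref{prop:hiddenEq:cocomplete}, but now summing over the edges in $E'$ rather than selecting a single minimizer or maximizer. The key observation is that the two conditions on $E'$ together assert that $E'$ (restricted to label $\sigma$) is the graph of a \emph{bijection} between $P$ and $Q$: each $p \in P$ has exactly one out-edge into $Q$, and each $q \in Q$ has exactly one in-edge from $P$. Let me call this bijection $\pi : P \to Q$, so that $(p, \pi(p), \sigma) \in E'$ and the map $p \mapsto \pi(p)$ is one-to-one and onto.

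First I would fix an arbitrary $x \in \reels^n$ and write down, for each $p \in P$, the Lyapunov inequality attached to its unique edge. Since $\graph$ is feasible for $(V_s)_{s \in S}$ and $(p, \pi(p), \sigma) \in E' \subseteq E$, the defining feasibility condition gives
$$
V_{\pi(p)}(f_\sigma(x)) \leq V_p(x), \qquad \forall p \in P.
$$
Next I would sum these inequalities over all $p \in P$. The right-hand side becomes $\sum_{p \in P} V_p(x)$ directly. For the left-hand side, because $\pi$ is a bijection from $P$ onto $Q$, as $p$ ranges over $P$ the image $\pi(p)$ ranges over each element of $Q$ exactly once; hence the reindexing $\sum_{p \in P} V_{\pi(p)}(f_\sigma(x)) = \sum_{q \in Q} V_q(f_\sigma(x))$ is exact, with no term counted twice and none omitted. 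This yields
$$
\sum_{q \in Q} V_q(f_\sigma(x)) = \sum_{p \in P} V_{\pi(p)}(f_\sigma(x)) \leq \sum_{p \in P} V_p(x),
$$
which is the claimed inequality, and since $x$ was arbitrary it holds for all $x \in \reels^n$.

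There is no serious obstacle here; the argument is a direct summation. The one point requiring genuine care—and the reason the $\exists!$ (unique existence) quantifiers matter rather than plain $\exists$—is the reindexing step. If $E'$ merely guaranteed \emph{existence} of edges in both directions (as in Propositions \ref{prop:hiddenEq:complete} and \ref{prop:hiddenEq:cocomplete}), a node could be covered multiple times and the summed inequality would fail: summing $|P|$ inequalities would not produce exactly $\sum_{q \in Q} V_q(f_\sigma(x))$ on the left. The bijection condition is precisely what rules this out and lets the two sums match term for term. I would also note explicitly that this is where the assumption that $\mathcal{U}$ is \emph{closed under addition} becomes relevant downstream (in the corollary using this proposition), since summing pieces must again land in the template; the inequality itself, however, does not require any structure on $\mathcal{U}$ beyond the individual feasibility inequalities.
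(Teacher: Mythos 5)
Your proof is correct and follows essentially the same route as the paper's (one-line) proof: enumerate the $|P|=|Q|$ Lyapunov inequalities encoded by the edges of $E'$ and sum them, with your bijection $\pi$ making the paper's implicit reindexing explicit. Your added remark on why the $\exists!$ quantifiers are needed for the reindexing is a faithful elaboration, not a departure.
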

\begin{proof}
The result is obtained by first enumerating the $\card{P} = \card{Q}$ Lyapunov inequalities encoded in $E'$, and then summing them up.
\end{proof}

\begin{example}
Consider the graphs $\graph_1 = (S_1, E_1)$ and $\graph_2 = (S_2,E_2)$ of Figure \ref{fig:G*2_2a} and \ref{fig:G*2_2b} respectively.

\begin{figure}[!ht]
\centering
\begin{subfigure}[b]{\columnwidth}
\centering
\begin{tikzpicture}[->,>=stealth',shorten >=1pt,auto,node distance=2.5cm,
                    semithick, scale = 1, transform shape ]
 \node[state] (a)                          {$a$};
 \node[state] (b)     [left of = a]       {$b$};
 \node[state] (c)     [right of = a]       {$c$};

  \path (a) edge [bend right] node [above] {$1$} (b)
  			edge node[above]{$2$} (b)
  			 edge [bend left] node {$1$} (c)
  			edge node{$2$} (c)
  		(b) edge [bend right] node [below] {$1$} (a)
  		(c) edge [bend left] node {$2$} (a)
  	    	;
\end{tikzpicture}
\caption{}
\label{fig:G*2_2a}
\end{subfigure}
\begin{subfigure}[b]{\columnwidth}
\centering
\begin{tikzpicture}[->,>=stealth',shorten >=1pt,auto,node distance=2cm,
                    semithick, scale = 1, transform shape ]
 \node[state] (V1)                           {$a'$};
 \node[state] (V2)     [right of = V1]       {$b'$};
  \path (V1)edge [looseness = 10, loop left]             node {$1$} (V1)
      	    edge [bend left] node {$1$}            (V2)
     	(V2)edge [bend left]             node {$2$} (V1)
      	    edge [loop right] node {$2$} (V2);
\end{tikzpicture}
\caption{}
\label{fig:G*2_2b}
\end{subfigure}
\caption{Example 4.1, the graph $\graph_1$ (8a) and the graph $\graph_2$ (8b). }
\label{fig:G*2_2}
\end{figure}
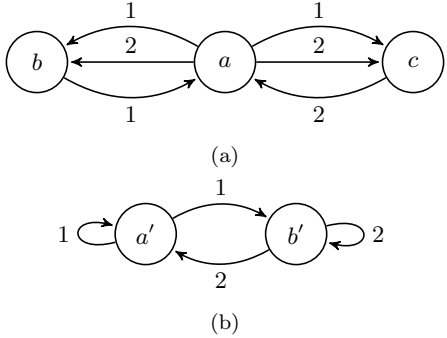
Observe that in $\graph_1$, if we take the two subsets of nodes $R_1 = \{a,b\}$ and $R_2 = \{a,c\}$, then we have that Proposition \ref{prop:bijection} holds for $P = Q = R_1$ and $\sigma = 1$; $P = Q = R_2$ and $\sigma = 2$; $P = R_1,  Q = R_2$ and $\sigma = 1$; and $P = R_2, Q = R_1$ and $\sigma = 2$.\\
Putting together these new Lyapunov inequalities, this allows us to conclude that if  $\{V_a, V_b, V_c\}$ is a solution for $\graph_1$, then $W_{a'} = V_a+V_b$ and $W_{b'} = V_a+V_c$ is a solution for $\graph_2$. Thus, if $\mathcal{U}$ is closed under addition, then it follows that $\graph_1 \leq_\mathcal{U} \graph_2$.
\end{example}

\begin{corollary}
For a graph $\graph = (S,E)$, if for all $\sigma \in [M]$, there exists a subset $E_\sigma \subset E$ such that
$$
\begin{aligned}
\forall p \in S, \exists ! \, q \in S: (p,q,\sigma) \in E_\sigma,\\
 \forall q \in S, \exists ! \, p \in S: (p,q,\sigma) \in E_\sigma,
 \end{aligned}$$
 then if $\graph$ is feasible for $(V_s)_{s \in S}$, the sum
 $\sum_{s \in S} V_s$ is a common Lyapunov function for the system.
 \label{cor:bijection}
\end{corollary}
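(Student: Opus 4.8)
The plan is to specialize Proposition~\ref{prop:bijection} to the case $P = Q = S$, since the hypothesis here is precisely that for each mode $\sigma$ the edge subset $E_\sigma$ realizes a bijection of $S$ onto itself using only edges labeled $\sigma$. First I would fix an arbitrary mode $\sigma \in [M]$ and invoke Proposition~\ref{prop:bijection} with $E' = E_\sigma$ and $P = Q = S$. The two stated conditions $\forall p \in S\, \exists!\, q$ and $\forall q \in S\, \exists!\, p$ are exactly the uniqueness hypotheses the proposition requires, so it yields immediately
$$\sum_{s \in S} V_s(f_\sigma(x)) \leq \sum_{s \in S} V_s(x), \quad \forall x \in \reels^n.$$
Since $\sigma$ was arbitrary, this decrease inequality holds simultaneously for every mode of the system.

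Next I would set $V(x) = \sum_{s \in S} V_s(x)$ and read the family of inequalities above as $V(f_\sigma(x)) \leq V(x)$ for all $\sigma \in [M]$, which is exactly the defining decrease condition of a common Lyapunov function. It then remains only to check that $V$ is a genuine Lyapunov function, i.e. that it satisfies radial bounds of the form (\ref{eq:VIsKappa}). This follows because the pieces already do: summing the bounds $\alpha_1(\|x\|) \leq V_s(x) \leq \alpha_2(\|x\|)$ over the finite set $S$ gives $\alpha_1(\|x\|) \leq V(x) \leq |S|\,\alpha_2(\|x\|)$, and both bounding functions are of class $\mathcal{K}_\infty$. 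Combining the decrease property with these bounds establishes that $V$ is a common Lyapunov function for the system.

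I do not expect any genuine obstacle here: the statement is a direct corollary, mirroring exactly how Corollaries~\ref{cor:CLFcomplete} and~\ref{cor:CLFcocomplete} specialize Propositions~\ref{prop:hiddenEq:complete} and~\ref{prop:hiddenEq:cocomplete} to $P = Q = S$. The only point deserving a word of justification is that the summed function inherits the $\mathcal{K}_\infty$ bounds, which is immediate from the finiteness of $S$; the core inequality is a one-line application of the bijection proposition.
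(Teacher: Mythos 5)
Your proposal is correct and follows exactly the paper's own one-line proof, which likewise applies Proposition~\ref{prop:bijection} with $P = Q = S$ (and $E' = E_\sigma$) for every mode $\sigma \in [M]$. The extra verification that $\sum_{s \in S} V_s$ inherits the $\mathcal{K}_\infty$ bounds from \eqref{eq:VIsKappa} is a sound, if routine, addition that the paper leaves implicit.
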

\begin{proof}
Proposition \ref{prop:bijection} holds for $P = Q = S$ and all $\sigma \in [M]$.
\end{proof}

\begin{example}
Consider the graph $\graph$ of Figure \ref{fig:graph:biject4} on four nodes and two modes.  If $\graph$ is feasible for a set $\{V_a, V_b, V_c, V_d\}$, then the system has a common Lyapunov function given by $V_a+V_b+V_c+V_d$.
\begin{figure}[!ht]
\centering
\begin{tikzpicture}[->,>=stealth',shorten >=1pt,auto,node distance=2cm,
                    semithick, scale = 1, transform shape ]
 \node[state] (a)                         {$a$};
 \node[state] (b)     [right of = a]       {$b$};
 \node[state] (c)     [right of = b]      {$c$};
 \node[state] (d)     [below of = b]      {$d$};
 
  \path (a) edge node {$1$} (b)
  			edge [bend right] node{$2$} (d)
  		(b) edge  [bend left] node [below]{$1$} (c)
  			edge [bend right] node {$2$} (c)
  		(c) edge node {$1$} (d)
  			edge [bend right = 45]node [above]{$2$} (a)
  		(d) edge  node [above] {$1$} (a)
  			edge node [right] {$2$} (b)
  		;
\end{tikzpicture}
\caption{A graph $\graph$ whose feasibility implies that of $\graph^\star$ (Example \ref{ex:bijectionex}).}
\label{fig:graph:biject4}
\end{figure}
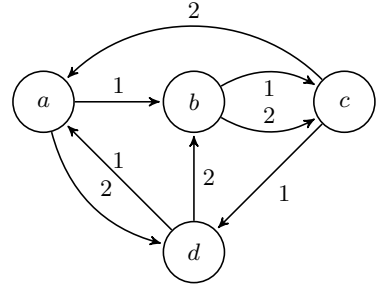
Taking $\mathcal{U}$ as the set of quadratic functions for example, we have $\graph \leq_\mathcal{U} \graph^\star$.
\label{ex:bijectionex}
 \end{example}

\subsection{Ordering by simulation}
\label{subsec:Simu}
This next criterion for ordering is actually independent of the choice of $\mathcal{U}$. It is inspired by the concept of \emph{simulation} between two automata \citep[pp. 91--92]{CaLaITDE}.
\begin{definition}(Simulation)
 Consider two path-complete graphs $\graph_1= (S_1,E_1)$ and $\graph_2= (S_2,E_2)$ with a same  labels $[M]$. We say that \emph{$\graph_1$ simulates $\graph_2$} if there exists a function $F(\cdot):S_2\rightarrow S_1$ such that for any edge $(s_2,d_2,\sigma)\in E_2$ there exists an edge $(s_1,d_1,\sigma)\in E_1$ with $F(s_2)=s_1$, $F(d_2)=d_1$.
 \label{def:simulation}
\end{definition}
\begin{remark}
The notion of simulation we use here is actually stronger than the classical one defined for automata, which defines a \emph{relation} between the states of the two automata rather than a \emph{function}.
\end{remark}
\begin{proposition}\label{prop:simulation}
Consider two graphs $\graph_1 = (S_1,E_1)$ and $\graph_2 = (S_2, E_2)$. If  $\graph_1$ is feasible for  $(V_s)_{s \in S_1}$, and $\graph_1$ simulates $\graph_2$  through the function $F: S_2 \rightarrow S_1$, then  $\graph_2$ is feasible for
$(W_s)_{s\in S_2}$, with
\begin{equation*}
W_s=V_{F(s)},\quad \forall s\in S_2.
\end{equation*}
\end{proposition}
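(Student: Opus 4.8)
The plan is to verify the feasibility condition directly from the definition of feasibility (Definition of a solution) applied to $\graph_2$. Recall that $\graph_2$ is feasible for $(W_s)_{s \in S_2}$ precisely when, for every edge $(s_2, d_2, \sigma) \in E_2$, the Lyapunov inequality $W_{d_2}(f_\sigma(x)) \leq W_{s_2}(x)$ holds for all $x \in \reels^n$. So I would fix an arbitrary edge $(s_2, d_2, \sigma) \in E_2$ and an arbitrary $x \in \reels^n$, and show that the defining inequality for $W$ follows.

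The key step is to pull the edge back through the simulation function $F$. Since $\graph_1$ simulates $\graph_2$ via $F$, the definition of simulation guarantees an edge $(s_1, d_1, \sigma) \in E_1$ with $F(s_2) = s_1$ and $F(d_2) = d_1$, carrying the \emph{same} label $\sigma$. Now I invoke the feasibility of $\graph_1$ for $(V_s)_{s \in S_1}$: because $(s_1, d_1, \sigma) \in E_1$, we have
\begin{equation*}
V_{d_1}(f_\sigma(x)) \leq V_{s_1}(x), \quad \forall x \in \reels^n.
\end{equation*}
It then remains only to substitute the definitions $W_{s}=V_{F(s)}$. Using $W_{d_2} = V_{F(d_2)} = V_{d_1}$ and $W_{s_2} = V_{F(s_2)} = V_{s_1}$, the inequality above reads exactly $W_{d_2}(f_\sigma(x)) \leq W_{s_2}(x)$, which is the required Lyapunov inequality for the edge $(s_2, d_2, \sigma)$ in $\graph_2$.

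Since the edge $(s_2,d_2,\sigma) \in E_2$ and the point $x$ were arbitrary, this establishes all the Lyapunov inequalities encoded by $\graph_2$, so $\graph_2$ is feasible for $(W_s)_{s \in S_2}$. I would also note that each $W_s = V_{F(s)}$ automatically inherits the bounds \eqref{eq:VIsKappa} from the corresponding $V_{F(s)}$, so the $W_s$ form a legitimate set of pieces. The proof is essentially a direct unwinding of definitions, so I do not expect a genuine obstacle; the only point requiring a modicum of care is to make sure the label $\sigma$ is preserved by the simulation (it is, by Definition of simulation), since the inequality from $\graph_1$ must involve the same $f_\sigma$ that appears in the inequality we wish to prove for $\graph_2$.
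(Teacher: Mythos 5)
Your proof is correct and follows essentially the same route as the paper's: both fix an arbitrary edge $(s,d,\sigma)\in E_2$, use the simulation $F$ to produce the corresponding edge $(F(s),F(d),\sigma)\in E_1$ with the same label, and then apply the feasibility of $\graph_1$ to obtain $W_d(f_\sigma(x)) = V_{F(d)}(f_\sigma(x)) \leq V_{F(s)}(x) = W_s(x)$. Your version simply makes explicit the details (quantification over $x$, the label-preservation point, and the inheritance of the bounds \eqref{eq:VIsKappa}) that the paper's one-line proof leaves implicit.
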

\begin{proof}
Taking any edge $(s,d, \sigma) \in E_2$, we get
$$ W_d(f_\sigma (x)) = V_{F(d)}(f_{\sigma} (x)) \leq V_{F(s)}(x) = W_s(x).$$
\end{proof}
\begin{example}
Consider the graphs   on three modes $\graph_1 = (S_1, E_1)$ and  $\graph_2 = (S_2, E_2)$  on three modes with the first depicted on Fig. \ref{fig:graphSimulating} and  the second on Fig. \ref{fig:graphSimulated}.
Proposition \ref{prop:simulation} applies here with $F(\cdot): S_2 \rightarrow S_1$ defined as $F(a') = a$, $F(b'_1) = F(b'_2) = b$, $F(c') = c$.
\begin{figure}[!ht]
\centering
\begin{subfigure}[b]{\columnwidth}
\centering
\begin{tikzpicture}[->,>=stealth',shorten >=1pt,auto,node distance=2.5cm,
                    semithick, scale = 1, transform shape ]
 \node[state] (a)                         {$a$};
 \node[state] (b)     [right of = a]       {$b$};
 \node[state] (c)     [right of = b]      {$c$};

  \path (a) edge [loop above] node {$2$} (a)
  			edge [loop left] node {$1$} (a)
  			edge [bend left] node {$1$} (b)
  			
  		(b) edge [loop above] node {$2$} (b)
  			edge [bend left] node {$3$} (c)
  			edge [bend left] node {$3$} (a)
  		
  		(c) edge [loop right] node {$3$} (c)
  			edge [bend left] node {$2$} (b)
  			edge [bend left= 45] node {$3$} (a)
  		
  		;
\end{tikzpicture}
\caption{Graph $\graph_1$ for Example \ref{ex:simulation}.}
\label{fig:graphSimulating}
\end{subfigure}
\begin{subfigure}[b]{\columnwidth}
\centering
\begin{tikzpicture}[->,>=stealth',shorten >=1pt,auto,node distance=2.5cm,
                    semithick, scale = 1, transform shape ]
 \node[state] (A)                         {$a'$};

  \node[state] (B)     [right  of = A] {$b'_1$};
 \node[state] (D)	  [right of = B] {$c'$};
  \node[state] (C)     [node distance = 2cm, above  of = B] {$b'_2$};
  \path (A) edge [loop above] node {$2$} (A)
  			edge [loop left] node {$1$} (A)
  			edge [bend left = 15] node {$1$} (B)
  			edge  node {$1$} (C)
  		(B) edge [loop below] node {$2$} (B)
  			edge node{$2$} (C)
  			edge [bend left] node {$3$} (A)  		
  		(C) edge [bend left] node {$3$} (D)
  		(D) edge [loop right] node {$3$} (D)
  			edge [bend left] node {$2$} (B)
  			edge [bend left] node {$2$} (C)
  			edge [bend left= 60] node {$3$} (A)
  		
  		;
\end{tikzpicture}
\caption{Graph $\graph_2$ for Example \ref{ex:simulation}.}
\label{fig:graphSimulated}
\end{subfigure}
\caption{$\graph_1$ simulates $\graph_2$. }
\label{fig:graphsSimulation}
\end{figure}
\label{ex:simulation}
\end{example}

\section{Example and experiment}
\label{Sec:NumericalExample}
In this section, we provide an illustration of our results. First, we present a practically motivated example, where we extract a common Lyapunov function from a Path-Complete Lyapunov function for a given discrete-time linear switching systems on three modes. 
We next present a numerical experiment comparing the performance of three particular path-complete graphs on a testbench of randomly generated systems, similar to that presented in \cite[Section 4]{AhJuJSRA}. 

Our focus is on \emph{linear switching systems}, and Path-Complete Lyapunov functions with \emph{quadratic pieces}. The existence of such Lyapunov functions can then be checked by solving the LMIs (\ref{eq:LMI})\footnote{A Matlab implementation for this example can be found at \url{http://sites.uclouvain.be/scsse/HSCC17_PCLF-AND-CLF.zip}.}.

\subsection{Extracting a Common Lyapunov function.}
The scenario considered here is similar to that of \cite[Section 4]{PhEsSODT}, and deals with the stability analysis of closed-loop linear time-invariant systems subject to failures in a communication channel of a \emph{networked control system} (see e.g. \cite{JuHeCOLS} for more on the topic). We are given a linear-time invariant system of the form $x(t+1)  =  (A + BK) x(t)$, with 
$$ A = \begin{pmatrix} 0.97 & 0.58 \\ 0.17 & 0.5 \end{pmatrix}, \, B = \begin{pmatrix} 0 \\ 1 \end{pmatrix}, K = \begin{pmatrix}
-0.55, 0.24
 \end{pmatrix}.$$

When a communication failure occurs, no signal arrives at the plant, and the control input is automatically set to zero.  In this case, the communication channel needs to be fixed before any feedback signal can reach the plant. In order to prevent the impact of failures, periodic inspections of the channel are foreseen every M steps. However, the inspection of the communication channel is costly, and we would like to compute the largest $M$ such that an inspection of the plant at every $M$ steps is sufficient to ensure its stability.\\
Given $M \geq 1$, we model the failing plant as a switching system with $M$ modes:  
\begin{equation}
x(t+M) = \tilde{A}_{\sigma(t)} x(t),
\label{eq:systemExample}
\end{equation} 
where  $\sigma \in \{1, \ldots, M\}$ and $\tilde{A}_\sigma = A^{\sigma-1}(A+BK)^{M-\sigma+1}$. In other words, for $\sigma(t) = k$, the communication channel will function properly from time $t$ up until time $t + (M-k)$ included,  and will then be down from time $t+(M-k)+1$ until time $t+M-1$ included. This assumes that the channel always functions properly at the very first step after inspection.

For $M = 1$, the stability analysis is direct as $(A+BK)$ is stable. For $M = 2$ we can verify that the system has a \emph{Path-Complete Lyapunov function} for the graph of Figure \ref{fig:G*2_2b}. The case $M = 4$ is straightforward: the matrix $\tilde{A}_4 = A^3(A+BK)$ is unstable, and thus the system is unstable in view of Definition $\ref{def:stability}$.

For the case when $M = 3$, we verify numerically that the system does not have a common quadratic Lyapunov function. Furthermore,it does not have  a Path-Complete Lyapunov function with quadratic pieces for $\graph_2$ on four nodes represented at Figure \ref{fig:graphSimulated}. Note that since  $\graph_1$ simulates $\graph_2$ (see Example \ref{ex:simulation}), this allows us to conclude that $\graph_1$ will not provide us with a Path-Complete Lyapunov function as well, or that would contradict Proposition \ref{prop:simulation}.

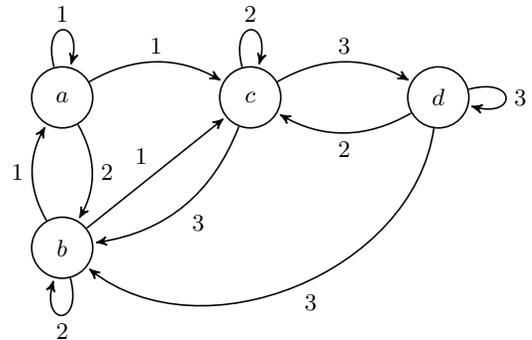
\begin{figure}[!ht]
\centering
\begin{tikzpicture}[->,>=stealth',shorten >=1pt,auto,node distance=2.5cm,
                    semithick, scale = 1, transform shape ]
 \node[state] (a)                         {$a$};
 \node[state] (b)    [node distance = 2cm, below of = a]                     {$b$};
 \node[state] (c)      [right of = a]                   {$c$};
 \node[state] (d)      [right of = c]                   {$d$};
 
  \path (a) edge [loop above] node {$1$} (a)
  			edge [bend left] node {$2$} (b)
  			edge [bend left] node {$1$} (c)
  		(b) edge [loop below] node {$2$} (b)
  			edge [bend left] node{$1$} (a)
  			edge node {$1$} (c)  		
  		(c) edge [loop above] node {$2$} (c)
  			edge [bend left] node {$3$} (b)
  			edge [bend left] node {$3$} (d)
  		(d) edge [loop right] node {$3$} (d)
  			edge [bend left] node {$2$} (c)
  			edge [bend left= 60] node {$3$} (b)
  		
  		;
\end{tikzpicture}
\caption{Graph $\graph_3$, for the example of Section \ref{Sec:NumericalExample}.}
\label{fig:graphUsedForExample}
\end{figure}
However, the graph $\graph_3$ on four nodes represented at Figure \ref{fig:graphUsedForExample}, which alike $\graph_2$ is simulated by $\graph_1$, does provide us with a Path-Complete Lyapunov function with four quadratic pieces.  By applying Theorem \ref{thm:EmbeddedCLF}, after computing the observer graph of $\graph_3$ (see Figure \ref{fig:exampleObserver}),  we obtain a common Lyapunov function $V(x)$ of the form (\ref{eq:embededLyapunovFunction}) for the system,
$$ V(x) = \min_{ S \in \{ \{a,c\}, \, \{b,c\}, \, \{b,d\}\}} \left ( \max_{s  \in S} V_s(x) \right ).$$
whose level set is represented in Figure \ref{fig:exampleLevelSets}.
\begin{figure}[!ht]
\centering
\begin{tikzpicture}[->,>=stealth',shorten >=1pt,auto,node distance=2.5cm,
                    semithick, scale = 1, transform shape ]
 \node[state] (abcd)                         {$a,b,c,d$};

 \node[state, very thick] (bc)      [right of = abcd]                   {$b,c$};
  \node[state, very thick] (ac)    [node distance = 2cm, above of = bc]                     {$a,c$};
 \node[state, very thick] (bd)      [right of = bc]                   {$b,d$};
 
 \path (abcd) edge node {$1$} (ac)
 			  edge [bend right = 50] node [below] {$3$} (bd)
 			  edge  node {$2$} (bc)
       (bc)   edge [bend left] node {$1$} (ac)
       		  edge   node {$3$} (bd)
       		  edge [loop below] node [below] {$2$} (bc)
        (ac)  edge [loop above] node {$1$} (ac)
        	  edge [bend left] node {$3$} (bd)
        	  edge  node {$2$} (bc)
         (bd) edge node {$1$} (ac)
         	  edge  [loop right] node {$3$} (bd)
         	  edge [bend left] node {$2$} (bc)
        ;
\end{tikzpicture}
\caption{Observer graph for $\graph_3$ at Figure \ref{fig:graphUsedForExample}. The subgraph on the nodes $\{a,c\}, \,\{b,c\}$ and $\{b,d\}$ is complete.}
\label{fig:exampleObserver}
\end{figure}
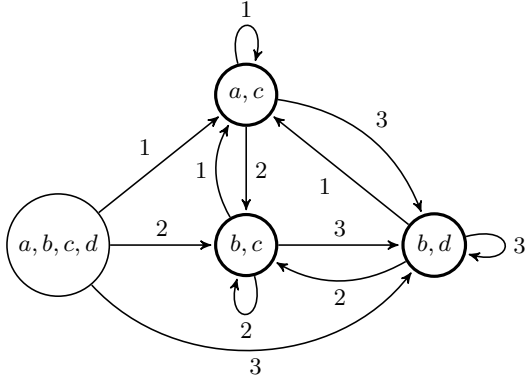
\begin{figure}[!ht]
\centering
\includegraphics[scale = 0.5]{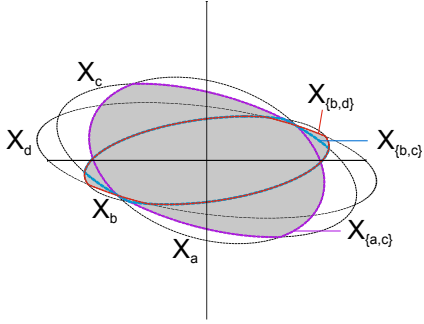}
\caption{The sets $X_a$, $X_b$, $X_c$ and $X_d$ are the level sets of the pieces of the Path-Complete Lyapunov function for the system (\ref{eq:systemExample}). We let $X_{\{i,j\}} = X_i \cap X_j$. The grey set is a level set of a Common Lyapunov function for the system, obtained from the Path-Complete Lyapunov function through Theorem \ref{thm:EmbeddedCLF}.}
\label{fig:exampleLevelSets}
\end{figure}

\label{subsection:CLFExample}

\subsection{Numerical experiment.}

In Section \ref{subsection:CLFExample} we presented a linear switching system for which a Path-Complete Lyapunov function with quadratic pieces exists for the graph $\graph_3$ of Figure \ref{fig:graphUsedForExample}, but not for $\graph_1$ of Figure \ref{fig:graphSimulating} and $\graph_2$ of Figure \ref{fig:graphSimulated}.
 However for another system with three modes, it could be $\graph_2$ that provides a stability certificate, and not $\graph_3$. It is therefore natural to ask which situation is the more likely to occur for random systems.

To this purpose\footnote{For a similar study with other graphs, see \cite[Section 4]{AhJuJSRA}.} we generate triplets of random matrices $M = \{A_1, A_2, A_3\}$\footnote{Each entry of each matrix is the sum of a Gaussian random variable with zero mean and unit variance,  and of a uniformly distributed random variable on $[-1,1]$}. Then, for each triplet and for each graph $\graph_i = (S_i,E_i),\, i = 1,2,3$,  we compute\footnote{This is a quasi-convex optimization program, solved using the numerical tools established in e.g. \cite{AhJuJSRA}.} the quantity
$$\gamma_i = \sup_{\gamma,\, (Q_{s} \succ 0)_{s \in S_i}} \gamma :
\left \{
\begin{aligned}
& \forall (s,d,\sigma) \in E_i,   \\
& \gamma^2 A_\sigma^\top Q_d A_\sigma - Q_s \preceq 0,
 \end{aligned}
 \right . , 
 $$
 that is, the higher number $\gamma$ such that $\graph_i$ provides a stability certificate for the system $x(t+1) = \gamma A_{\sigma(t)} x(t), \, A_\sigma \in M$. 
For a given triplet $M = \{A_1, A_2, A_3\}$, the fact that $\gamma_i \geq \gamma_j$, $i \neq j$, translates as follows:
whenever $\graph_j$ induces a Lyapunov function so does $\graph_i$.
Note that it is  possible that for a triplet $M$, we get $\gamma_i = \gamma_j$.

The results are presented in the Venn diagram of Figure \ref{fig:Venn} for 10800 triplets with matrices of dimension $n = 2$.

\begin{figure}

\centering
\includegraphics[width=0.6\columnwidth]{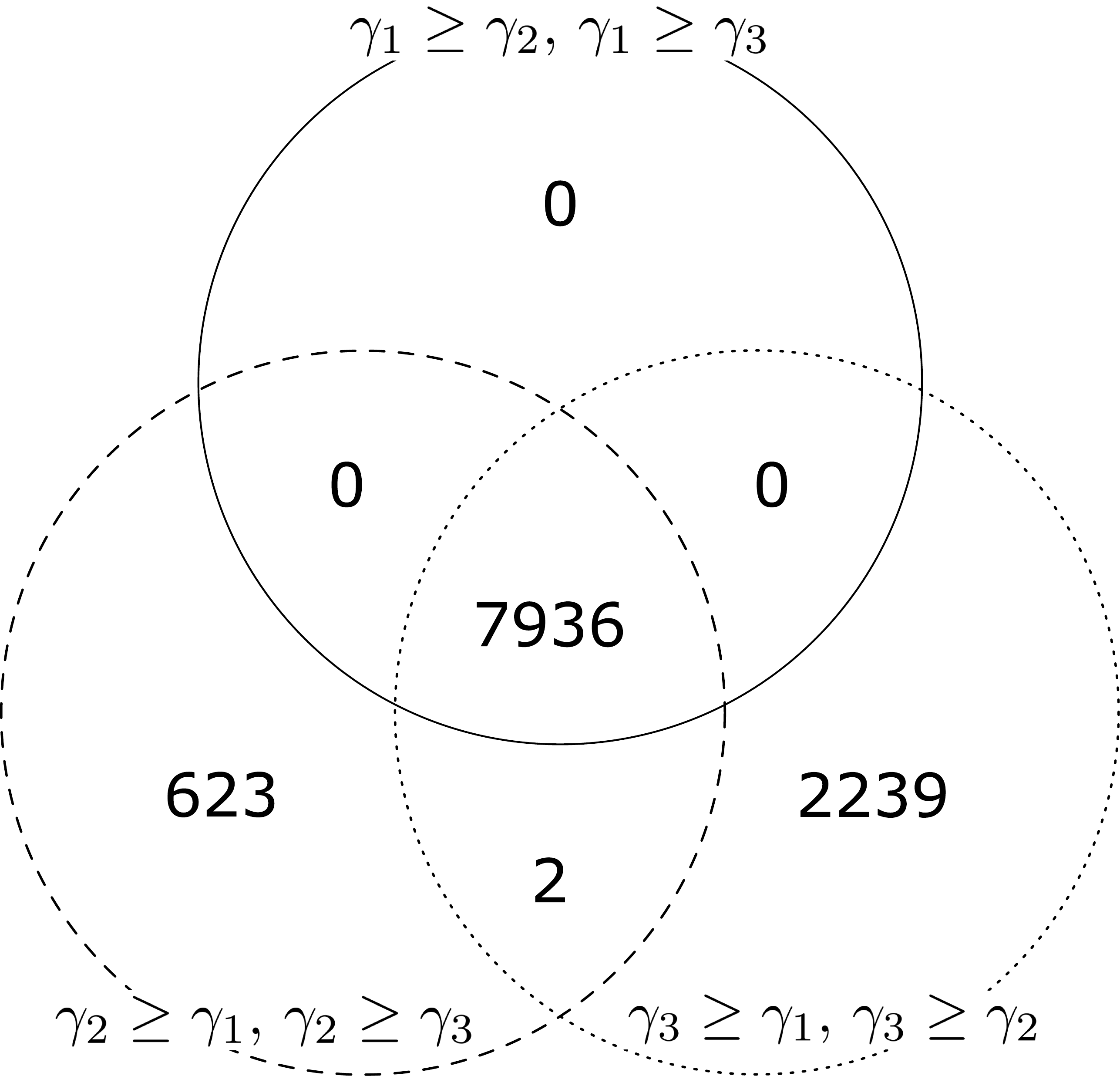}
\caption{Visualization of the outcome of the numerical experiment. As expected from the theory, whenever $\graph_1$ provides a stability certificate, so do  $\graph_2$ and $\graph_3$. There are more systems for which $\graph_3$ provides a certificate and not $\graph_2$ than the reverse. Interestingly, it appears unusual that a system has a stability certificate for both $\graph_2$ and $\graph_3$, and not $\graph_1$.}
\label{fig:Venn}
\end{figure}

We observe that the results are in agreement with Proposition \ref{prop:simulation}, when $\graph_1$ provides a stability certificate, so do $\graph_2$ and $\graph_3$. Also, it appears that a random triplet of matrices is more likely to have a Lyapunov function induced by $\graph_3$ ($\sim 94 \%$ of the cases) rather than by $\graph_2$ ($\sim 79 \%$ of the cases). Interestingly, there appear to be very few instances for which $\gamma_2 = \gamma_3 > \gamma_1$, which deserves further attention.

\section{Conclusion} \label{conclusions}
Path-complete criteria are promising tools for the analysis of hybrid or cyber-physical systems. 
They encapsulate several powerful and popular techniques for the stability analysis of swiching systems.  
However, their range of application seems much wider, as for instance 1) they can handle switching nonlinear systems as well, as it is the case herein, 2) they are not limited to LMIs and quadratic pieces and 3) they have been used to analyze systems where the switching signal is constrained \cite{PhEsSODT}.  On top of this, we are investigating the possibility of studying other problems than stability analysis with these tools.

However, already for the simplest particular case of multiple quadratic Lyapunov functions for switching linear systems, many questions still need to be clarified.  In this paper we first gave a clear interpretation of these criteria in terms of common Lyapunov function: each criterion implies the existence of a common Lyapunov function which can be expressed as the minimum of maxima of sets of  functions.  We then studied the problem of comparing the (worst-case) performance of these criteria, and provided two results that help to partly understand when/why one criterion is better than another one.  We leave open the problem of deciding, given two path-complete graphs, whether one is better than the other.

\bibliographystyle{plain}
\bibliography{biblio}

\end{document}